\DeclareMathAlphabet{\mathpzc}{OT1}{pzc}{m}{it}
\newtheorem{theorem}[equation]{Theorem}
\newtheorem{theorem-definition}[equation]{Theorem-Definition}
\newtheorem{lemma-definition}[equation]{Lemma-Definition}
\newtheorem{definition-prop}[equation]{Proposition-Definition}
\newtheorem{prop}[equation]{Proposition}
\newtheorem{lemma}[equation]{Lemma}
\newtheorem{cor}[equation]{Corollary}
\theoremstyle{definition}
\newtheorem{definition}[equation]{Definition}
\newtheorem{example}[equation]{Example}
\newcommand{\N}{\ensuremath{\mathbb{N}}}
\newcommand{\Z}{\ensuremath{\mathbb{Z}}}
\newcommand{\R}{\ensuremath{\mathbb{R}}}
\newcommand{\C}{\ensuremath{\mathbb{C}}}
\renewcommand{\O}{\mathcal{O}}
\newcommand{\A}{\ensuremath{\mathbb{A}}}
\newcommand{\cX}{\ensuremath{\mathcal{X}}}
\newcommand{\cZ}{\ensuremath{\mathcal{Z}}}
\newcommand{\cY}{\ensuremath{\mathcal{Y}}}
\newcommand{\red}{\ensuremath{\mathrm{red}}}
\DeclareMathOperator{\Spec}{Spec}
\DeclareMathOperator{\lcm}{lcm}
\DeclareMathOperator{\Tr}{Tr}
\newcommand{\spe}{\ensuremath{\mathrm{sp}}}
\numberwithin{equation}{section} \hyphenpenalty=6000
\newcommand{\sss}{\vspace{5pt} \subsection*{ }\refstepcounter{equation}{{\bfseries(\theequation)}\ }}
\title{The specialization index of a variety over a discretely valued field}
\author[Lore Kesteloot]{Lore Kesteloot}
\address{KULeuven\\
Department of Mathematics\\ Celestijnenlaan 200B\\3001 Heverlee \\
Belgium} \email{lore.kesteloot@wis.kuleuven.be}
\author[Johannes Nicaise]{Johannes Nicaise}
\address{KULeuven\\
Department of Mathematics\\ Celestijnenlaan 200B\\3001 Heverlee \\
Belgium} \email{johannes.nicaise@wis.kuleuven.be}
\begin{document}
\begin{abstract}
Let $X$ be a proper variety over a henselian discretely valued field. An important obstruction to the existence of a rational point on $X$ is the {\em index}, the minimal positive degree of a zero-cycle on $X$.
 This paper introduces a new invariant, the
{\em specialization index}, which is a closer approximation of the existence of a rational point.
 We provide
an explicit formula for the specialization index in terms of an $snc$-model,
 and we give examples of curves where the index equals one but the specialization index is different from one, and thus explains the absence of a rational point.
 Our main result states that the specialization index of a smooth, proper, geometrically connected $\C((t))$-variety with trivial coherent cohomology is equal to one.
\end{abstract}
\maketitle

\section{Introduction}\label{sec-intro}
Let $R$ be a henselian discrete valuation ring with quotient field
$K$ and algebraically closed residue field $k$. The $C_1$ conjecture predicts that every separably rationally connected $K$-variety has a rational point\footnote{To be precise, the field $K$ is not necessarily $C_1$; but on smooth $K$-varieties, the existence of a rational point is equivalent to the existence of a point over the completion of $K$, which is a $C_1$ field.}.
 This is known when $R$ has equal characteristic: by Greenberg approximation one can assume that the variety is defined over a function field, in which case the statement was proven by Grabber-Harris-Starr in characteristic zero \cite{GHS} and de Jong-Starr in positive characteristic \cite{jongstarr2003} by means of geometric deformation arguments. However, no direct proof (avoiding Greenberg approximation) is known, and  the mixed characteristic case is still wide open. For these purposes, it is desirable to obtain a motivic understanding of the result in equal characteristic, which would have better chances to generalize to mixed characteristic. A fundamental result in this direction is Esnault's proof of the existence of a rational point on rationally chain connected varieties over finite fields \cite{esnault2003}.

 An important obstruction to the existence of a rational point on a $K$-scheme of finite type $X$ is given by the
  \textit{index}
$\iota(X)$. It is the greatest common
divisor of the degrees of the closed points on $X$, or, equivalently, the minimum of the set of integers $d>0$ such that $X$ has a zero-cycle
of degree $d$. If $X$ has a $K$-rational point, then $\iota(X)$ equals one, but the converse does not hold in general.
 In many interesting situations, the index can be computed in a motivic or cohomological way. For instance, Esnault, Levine and Wittenberg
 have proven that, if $X$ is proper over $K$ and we denote by $p\geq 1$ the characteristic exponent of $k$, then the prime-to-$p$ part of the index $\iota(X)$ divides the Euler characteristic of any coherent sheaf on $X$ \cite[3.1]{esnlevwit}. It follows that, when $k$ has characteristic zero, every rationally connected $K$-variety $X$ has index one, since the higher coherent cohomology spaces of $X$ vanish so that the coherent Euler characteristic of $X$ equals one. The latter result has been proven independently by several authors; we refer to the introduction of \cite{esnlevwit} for precise references. Beware, however, that the triviality of the coherent cohomology is not sufficient to guarantee the existence of a rational point: in \cite{Lafon}, Lafon has constructed an example of an Enriques surface over $\C((t))$ without rational points.

 The purpose of the present paper is to introduce a finer invariant than the index, which we call the {\em specialization index}, for proper $K$-schemes of finite type. It is always bounded by the index and it equals one if $X$ has a rational point. The interest of this invariant lies in the fact that there exist many proper $K$-schemes of index one whose specialization index is strictly larger than one. In such examples, the specialization index explains the non-existence of a rational point, whereas the index does not. Our main result (Theorem \ref{thm-zero}) states that, when $k$ has characteristic zero and $X$ is a smooth, proper, geometrically connected $K$-variety with trivial coherent cohomology, then the specialization index of $X$ equals one. This strengthens the aforementioned result for the index. In a sense, this result is disappointing, because it means that the specialization index does not allow to distinguish between varieties with and without rational points in the case of trivial coherent cohomology.
 Our proof is inspired by, and makes use of, the Woods Hole trace formula \cite[III.6.12]{sga5}, which implies that over a {\em finite} field, every smooth and proper variety with trivial coherent cohomology has a rational point.

 Let us give a brief overview of the structure of the paper.
 In Section \ref{sec-def}, we will define the specialization index and give
an explicit way to compute it in terms of the special fiber of an $snc$-model.
 We then use this formula, together with Winters's classification of combinatorial reduction types of curves, to produce examples
of curves $C$ with $\iota(C)=1$ and $\iota_\spe(C)>1$ in Section \ref{sec-curves}. In these examples,
the specialization index explains the absence of a rational point, while the
 index does not.
In Section \ref{sec-cohomology}, we prove our main result, Theorem \ref{thm-zero}. The proof is based on Hodge theory, the Woods Hole trace formula and a description of the behavior
 of $snc$-models under ramified base change.

\subsection*{Acknowledgements} We are grateful to H\'el\`ene Esnault and Olivier Wittenberg for stimulating discussions on the content of this paper, and for valuable comments.
In particular, Olivier Wittenberg made several useful suggestions to improve an earlier version of this paper. The research of the first author was supported by a PhD fellowship of the Research Foundation of Flanders (FWO).

\subsection*{Notation and terminology}
\sss Let $R$ be a henselian discrete valuation ring with
algebraically closed residue field $k$ and  quotient field $K$.
We fix an algebraic closure $K^a$ of $K$. The
 integral closure $R^a$ of $R$ in $K^a$ is a valuation ring with
quotient field $K^a$ and residue field $k$ (see \cite[II.6.2]{neukirch}). We denote by $p$ the characteristic exponent of $k$.

\sss Let $L$ be a finite extension of $K$. Since $K$ is henselian, the discrete valuation on $K$ extends uniquely to a valuation on $L$, whose valuation ring is the integral closure of $R$ in $L$ \cite[II.6.2]{neukirch}. If we denote by $e$ the ramification index of $L$ over $K$, then Ostrowski's Lemma implies that
 $[L:K]=\delta e$ with $\delta$ a power of $p$ (see Theorem 2 in \cite[6.1]{ribenboim}). The invariant $\delta$ is called the {\em defect} of the extension $L/K$. We say that the extension $L/K$ is defectless if $\delta=1$; this is always the case  if $L$ is separable over $K$ or $R$ is excellent (in particular, if $R$ is complete), as can be deduced from Theorem 2 and Corollary 1 in \cite[VI.8.5]{bourbaki}.

\sss For every $R$-scheme $\cX$, we denote by $\cX_K=\cX\times_R K$ its generic fiber and by $\cX_k=\cX\times_R k$ its special fiber.
 If $\cX$ is proper over $R$, we can
 consider the specialization
map $$\spe_{\cX}:\cX_K(K^a)\to \cX_k(k).$$ It is defined
by composing the inverse of the bijection $\cX(R^a)\to \cX_K
(K^a)$ with the reduction map $\cX(R^a)\to
\cX_k(k)$.

\sss Let $X$ be a proper $K$-scheme. A \textit{model} for $X$ is a proper flat
$R$-scheme $\cX$ endowed with an isomorphism of $K$-schemes
$\cX_K\to X.$  Nagata's compactification theorem implies
that a model of $X$ always exists. A morphism of models is defined in the obvious way. If $\cX$ and $\cX'$ are models of $X$ then we say that $\cX'$ dominates $\cX$ if there exists a morphism of models $\cX'\to \cX$ (such a morphism is unique if it exists). The domination relation defines a partial ordering $\geq$ on the set of isomorphism classes of $R$-models of $X$.

\sss \label{sss:res} We say that $\cX$ is an $snc$-model of $X$ if $\cX$ is regular and $\cX_k$
is a divisor with strict normal crossings. If $p=1$, then every regular proper $K$-scheme $X$ has an $snc$-model by Hironaka's resolution of singularities. More precisely, for
every model $\cX$ of $X$ there exists a morphism of models $\cX'\to \cX$
  such that $\cX'$ is an $snc$-model of $X$\footnote{More precisely, Hironaka showed that we can find such a morphism as a composition of blow-ups at centers in the special fiber if $R$ is complete. The general case then follows from the fact that blowing up commute with flat base change and that the $snc$ property can be checked after base change to the completion.}.
  By \cite[1.1]{cossartpiltant2014} and \cite[0.3]{cossart-jannsen-saito}, this also holds for $p\geq 2$ provided that $R$ is excellent and the dimension of $X$ is at most two. If $p=1$ or $X$ is a curve, one can arrange that $\cX'\to \cX$ is an isomorphism over the $snc$-locus of $\cX$, but this does not seem to follow directly from the statements in  \cite{cossart-jannsen-saito} and  \cite{cossartpiltant2014}.

\sss \label{not-snc} For every regular proper $K$-scheme $X$ and every $snc$-model $\cX$ of $X$, we introduce the following notation.
  We write $$\cX_k=\sum_{i\in I}N_iE_i$$ with $E_i$, $i\in I$ the irreducible components of $\cX_k$ and $N_i$ the multiplicity of $E_i$ in the divisor $\cX_k$. For every non-empty subset $J$ of $I$ we set $$E_J=\bigcap_{j\in J}E_i,\quad E_J^o=E_J\setminus \left(\bigcup_{i\notin J}E_i\right).$$ The sets $E_J$ form a partition of $\cX_k$ into locally closed subsets. We also write
  $$N_J=\mathrm{gcd}\{N_j\,|\,j\in J\}.$$

\section{The specialization index}\label{sec-def}

\begin{definition}[The specialization index]
For every proper $R$-scheme $\cX$, we denote by $\mathcal{D}_{\cX}$
the set of integers $d>0$ such that there exists a zero-cycle of degree
$d$ on $\cX_K$ whose support is contained in $\spe_{\cX}^{-1}(x)$ for
some $x\in \cX(k)$. We call the minimum of $\mathcal{D}_{\cX}$ the
 specialization index of $\cX$, and we denote it by $\iota_{\spe}(\cX)$.

For every proper $K$-scheme $X$, we set
$$\mathcal{D}_{X}=\bigcap \{\mathcal{D}_{\cX}\,|\,\cX\mbox{ is an }R\mbox{-model of }X\}.$$
We call the minimum of $\mathcal{D}_{X}$ the specialization index
of $X$, and we denote it by $\iota_{\spe}(X)$.
\end{definition}

\sss Note that $\mathcal{D}_X$ is non-empty, because for every closed point
 $x$ of $X$, the degree of $x$ belongs to $\mathcal{D}_X$.  It is obvious
 from the definition that $\iota_{\spe}(X)\geq \iota_{\spe}(\cX)$ for every
 model $\cX$ of $X$, and that $\mathcal{D}_{\cX'} \subseteq \mathcal{D}_{\cX}$ (and hence $\iota_{\spe}(\cX')\geq \iota_{\spe}(\cX)$) if $\cX$ and $\cX'$ are models of $X$ such that $\cX'$ dominates $\cX$.
 If we denote by $\iota(X)$ the index of $X$ and by $\nu(X)$ its $\nu$-invariant (that is, the minimal degree of a closed point of $X$), then
  we obviously have
 $\iota(X)|\,\iota_{\spe}(X)$ and $\iota_{\spe}(X)\leq \nu(X)$. In particular, if $X$ has a rational point, then $\iota(X)=\iota_{\spe}(X)=1$.

\sss At first sight, it seems difficult to compute the specialization index $\iota_{\spe}(X)$ of a
 proper $K$-scheme $X$, since the definition suggests that we have to
 check all possible models of $X$. We will show in Proposition
 \ref{prop-indsncd} that, if every model of $X$ can be dominated by
an  $snc$-model, the specialization index can be computed on a single
$snc$-model. First, we need to establish some auxiliary results.

\begin{lemma}\label{lem:pointwithmult}
Let $X$ be a proper $K$-variety and let $\cX$ be a model of $X$. Suppose that $y \in \cX_k$ is a closed point such that both $\cX$
and $(\cX_k)_{\red}$ are regular at $y$, and denote by $N$ the multiplicity of $\cX_k$ at $y$.
  Then there is a closed point $x \in X$ that
specializes to $y$ and such that residue field at $x$ is a defectless extension of $K$ of degree $N$.
 \end{lemma}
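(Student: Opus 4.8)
The plan is to produce $x$ as the generic point of a curve cut out of $\cX$ near $y$ by part of a regular system of parameters, and to read off the invariants of $\kappa(x)$ from the local structure of $\cX$ at $y$ together with the henselianity of $R$ and the fact that the model $\cX$ is of finite type over $R$.

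I would begin by passing to the regular local ring $A=\O_{\cX,y}$; it has residue field $k$, since $y$ is a closed point of the finite type $k$-scheme $\cX_k$ and $k$ is algebraically closed, and it has dimension $n+1$ where $n=\dim X$. As $(\cX_k)_{\red}$ is regular at $y$, its ideal in $A$ is principal, generated by an element $t\in\mathfrak m_A\setminus\mathfrak m_A^{2}$; and since $\cX_k=\mathrm{div}(\pi)$ equals $N\cdot(\cX_k)_{\red}$ near $y$, we may write $\pi=u\,t^{N}$ with $u\in A^{\times}$. Completing $t$ to a regular system of parameters $(t,x_1,\dots,x_n)$ of $A$, the quotient $B:=A/(x_1,\dots,x_n)$ is a discrete valuation ring with uniformiser $\bar t$, residue field $k$, and $\pi=\bar u\,\bar t^{N}$ for a unit $\bar u$; set $L:=\mathrm{Frac}(B)$. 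Spreading $A$ out over an affine open $\Spec\Lambda\subseteq\cX$ containing $y$ on which $x_1,\dots,x_n$ are defined, the subscheme $V(x_1,\dots,x_n)$ has local ring $B$ at $y$, so it has a unique component $Z$ through $y$ with $\O_{Z,y}=B$; since $\pi\neq 0$ in $B=\O_{Z,y}$, the integral curve $Z$ dominates $\Spec R$ and has dimension $1=\dim R$, whence its generic point $x$ is a closed point of $X=\cX_K$ with $\kappa(x)=L$ finite over $K$, and $x$ specialises to $y$ because $y$ lies in the closure $\overline{\{x\}}$ in $\cX$.

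Next I would identify $B$ with the valuation ring $\O_L$ of $L$. The ring $B$ is a discrete valuation ring containing $R$ with $v_B(\pi)=N>0$, hence is the valuation ring of an extension of the valuation of $K$ to $L$; since $K$ is henselian, this extension is unique and its valuation ring is the integral closure of $R$ in $L$, so $B=\O_L$. In particular the residue field of $L$ is $k$ and the ramification index of $L/K$ is $N$.

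It remains to prove that $[L:K]=N$, that is, that $L/K$ is defectless; this is the only step that uses that $\cX$ is of finite type over $R$. The ring $\O_L=B=A/(x_1,\dots,x_n)$ is essentially of finite type over $R$, and, being the integral closure of $R$ in $L$, it is integral over $R$. Write $B=\Gamma_{\mathfrak q}$ with $\Gamma$ a finite type $R$-algebra, which after replacing $\Gamma$ by its image in $B$ we may take to be a subring of $B$; then $\Gamma$ is integral, hence module-finite, over $R$, and since $R$ is henselian $\Gamma$ is a finite product of local rings. As $B$ is local of dimension one, it is isomorphic to one of these factors, hence $\O_L$ is finite over $R$; being torsion-free it is free, of rank equal both to $[L:K]=\dim_K(\O_L\otimes_R K)$ and to $\dim_k(\O_L/\pi\O_L)=\dim_k(B/\bar t^{N}B)=N$. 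Hence $[L:K]=N=e(L/K)$ and the defect is trivial. The main obstacle is precisely this last point: for an arbitrary finite extension $L/K$ the valuation ring $\O_L$ need not be finite over $R$, and one would then only obtain $[L:K]=N\delta$ for a power $\delta$ of $p$; it is the combination of this finiteness, the essentially-of-finite-type property inherited from the model, and the henselianity of $R$ that makes the argument succeed.
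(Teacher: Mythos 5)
Your proof is correct and follows essentially the same route as the paper's: complete $t$ to a regular system of parameters, pass to the one-dimensional regular quotient $B=\O_{\cX,y}/(x_1,\dots,x_n)$, spread it out to a finite-type curve over $R$, use henselianity to conclude that $B$ is finite over $R$, and read off $[L:K]=N$ from $\dim_k(B/\pi B)$. The only (harmless) difference is that where the paper invokes \cite[18.5.11(c')]{EGAIV4} to get finiteness of $B$ over $R$, you obtain it by identifying $B$ with the integral closure of $R$ in $L$ (via uniqueness of the extended valuation) and combining integrality with the finite-type property.
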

 \begin{proof}
   The following proof is taken from \cite[4.6]{wittenberg}. Let $\pi$ be a uniformizer in $R$. Then we can find elements $u$ and $t$ in $\mathcal{O}_{\cX,y}$ such that $u$ is a unit, $t=0$ is a local equation for $(\cX_k)_{\red}$ at $y$, and
  $\pi=ut^N$. We can extend $t$ to a regular system of local parameters
 $(t, f_1, \ldots, f_n)$ in $\O_{\cX,y}$. We set $$R'=\frac{\O_{\cX,y}}{(f_1, \ldots, f_n)}.$$
 Then $R'$ is a regular local ring of dimension 1, and thus
 a discrete valuation ring. The residue class of  $t$ is a uniformizer in $R'$ and its residue field $R'/(t)$ is isomorphic to $k$, the residue field of $\cX$ at $y$.
 The natural morphism $R \to R'$ is flat because it maps $\pi$ to the non-zero element $ut^N$.

 Take $U=\Spec A$ an affine open around $y$ such that $f_1,\ldots, f_n$ and $t$ are defined on $U$. If we take
 $$V = \Spec \frac{A}{(f_1, \ldots, f_n)},$$ then $y \in V$ and $\O_{V,y} = R'$. Now $f:V \to \Spec R$ is of finite
 type and the image of  $y$ is the closed point of $\Spec R$.  Thus, \cite[18.5.11(c')]{EGAIV4}
 implies that  $R'$ is a finite $R$-algebra. Since $R'$ is flat over $R$ and $R'/\pi R'  \cong R'/(t^N)$, we know that it is free
 of  rank $N$.
 Hence, the fraction field $K'$ of $R'$ is  an extension of $K$ of degree $N$.
 If we denote by $x$ the image of the generic point of $\Spec R'$ under the natural morphism
$$\Spec R'\to  \cX,$$ then $x$ is a closed point of degree $N$ on $X$ and $\spe_{\cX}(x)=y$.
 \end{proof}

\begin{prop}\label{prop-sncd}
Let $X$ be a proper  $K$-scheme and let $\cX$ be a model for $X$.
 Let $y$ be a
  closed point of $\cX_k$ such that $\cX$ is regular at $y$ and $\cX_k$ has strict normal crossings at $y$.
  Denote by $E_j$, $j\in J$ the irreducible components of $\cX_k$ that pass through $y$ and by $N_j$ the multiplicity of $\cX_k$ along $E_j$.

  Then for every positive integer $d$,
    the following properties
are equivalent:
\begin{enumerate} \item \label{it:spe} there exists a closed point $x$ on $X$ whose degree divides $d$ and such that
$\spe_{\cX}(x)=y$; \item \label{it:combi} there exists a
tuple $\alpha$ in $(\Z_{>0})^J$ such that $$\sum_{j\in J}\alpha_j N_j=d.$$
\end{enumerate}
\end{prop}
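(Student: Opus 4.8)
The plan is to prove the two implications separately; the substance lies in \ref{it:combi} $\Rightarrow$ \ref{it:spe}.

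For \ref{it:spe} $\Rightarrow$ \ref{it:combi}, suppose $x$ is a closed point of $X$ with $[\kappa(x):K]\mid d$ and $\spe_{\cX}(x)=y$. Set $K'=\kappa(x)$ and let $R'$ be the integral closure of $R$ in $K'$. Since $K$ is henselian, $R'$ is a discrete valuation ring, finite over $R$, with residue field $k$; write $v'$ for its normalized valuation, so that $v'(\pi)=e$, the ramification index of $K'/K$, and $[K':K]=e\delta$ with $\delta$ the defect. By the valuative criterion of properness, $x$ extends uniquely to a section $\Spec R'\to\cX$ whose closed point maps to $\spe_{\cX}(x)=y$, and this induces a local homomorphism $\O_{\cX,y}\to R'$. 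Choose a regular system of local parameters of $\O_{\cX,y}$ that includes local equations $x_j$, $j\in J$, for the components $E_j$ through $y$; since the divisor $\mathrm{div}(\pi)$ equals $\sum_{j\in J}N_jE_j$ in a neighbourhood of $y$, we may write $\pi=u\prod_{j\in J}x_j^{N_j}$ with $u$ a unit. Each $x_j$ lies in $\mathfrak{m}_{\cX,y}$, so its image in $R'$ lies in $\mathfrak{m}_{R'}$ and $a_j:=v'(x_j)\geq 1$; applying $v'$ to the factorization of $\pi$ yields $e=\sum_{j\in J}a_jN_j$. Writing $d=e\delta c$ with $c\in\Z_{>0}$ and putting $\alpha_j:=\delta c\,a_j\in\Z_{>0}$ produces the required tuple.

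For \ref{it:combi} $\Rightarrow$ \ref{it:spe}, the strategy is to pass to an $snc$-model dominating $\cX$ on which $d$ occurs as the multiplicity of a component, and then invoke Lemma \ref{lem:pointwithmult}. Fix a regular system of local parameters $(x_j)_{j\in J}\cup(z_i)$ of $\O_{\cX,y}$ with $x_j$ a local equation for $E_j$, so that $\pi=u\prod_{j\in J}x_j^{N_j}$ with $u$ a unit. Let $v$ be the divisorial valuation on the function field $K(\cX)$ that is monomial with respect to this system, with weight $\alpha_j$ on $x_j$ for $j\in J$ and weight $1$ on each $z_i$. Then $v$ is centered at the closed point $y$, and $v(\pi)=\sum_{j\in J}\alpha_jN_j=d$. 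Because $v$ is a monomial valuation, it can be realized as $\mathrm{ord}_{E'}$ for an irreducible component $E'$ of the special fiber of an $snc$-model $\cX'$ of $X$ dominating $\cX$: this only requires a finite sequence of blow-ups of $\cX$ along regular centers having normal crossings with the special fiber (iterated blow-ups of intersection strata), which is purely combinatorial, valid in arbitrary characteristic, and needs no embedded resolution of singularities. On $\cX'$ the component $E'$ has multiplicity $v(\pi)=d$ in $\cX'_k$, and its generic point maps to $y$ since $y$ is the center of $v$ on $\cX$. Choosing a closed point $y'$ in the dense open $E'\setminus\bigcup_{F\neq E'}F$, both $\cX'$ and $(\cX'_k)_{\red}$ are regular at $y'$ and $\cX'_k$ has multiplicity $d$ there, so Lemma \ref{lem:pointwithmult} produces a closed point $x$ of $X$ with $\spe_{\cX'}(x)=y'$ and $[\kappa(x):K]=d$. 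Compatibility of the specialization map with the morphism of models $\cX'\to\cX$ gives $\spe_{\cX}(x)=y$, and $[\kappa(x):K]=d$ divides $d$.

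I expect the construction of $\cX'$ to be the main obstacle. When $y$ lies on fewer than $\dim\cX$ components there is a shortcut avoiding any blow-up: choosing one parameter $z_1$ transverse to all the $E_j$, the closed subscheme of $\cX$ defined near $y$ by setting the remaining transverse parameters to zero and $x_j=z_1^{\alpha_j}$ is a regular curve through $y$, finite and flat over $R$ of degree $d$ by the EGA argument used for Lemma \ref{lem:pointwithmult}, and the image of its generic point is the point $x$ we want. The delicate case is the opposite extreme, where $y$ lies on exactly $\dim\cX$ components: there is then no transverse direction in which to prescribe the vanishing orders $\alpha_j$ along a regular curve, so one genuinely has to modify $\cX$, and the point requiring care is that the modification can be kept regular with $snc$ special fiber in all characteristics — which is precisely why it matters that $v$ is monomial and the blow-ups involved are combinatorial.
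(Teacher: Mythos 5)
Your implication \eqref{it:spe}~$\Rightarrow$~\eqref{it:combi} matches the paper's argument (apply the valuation of $\kappa(x)$ to the factorization $\pi=u\prod_j z_j^{N_j}$ and use Ostrowski's lemma to pass from the ramification index to a divisor of $d$); that part is fine. For \eqref{it:combi}~$\Rightarrow$~\eqref{it:spe} you take a genuinely different route. The paper performs a \emph{single} toroidal modification: it subdivides the cone $\R_{\geq 0}^J$ by inserting the ray through $\alpha$ and invokes Kato's result on subdivisions of log regular schemes. The resulting model $\cX'$ is only log regular, hence normal but in general \emph{not} regular and not $snc$; this is enough because normality gives regularity in codimension one, so a general closed point of the exceptional divisor satisfies the hypotheses of Lemma~\ref{lem:pointwithmult}. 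You instead insist on realizing the monomial valuation $v$ as $\mathrm{ord}_{E'}$ on a genuine $snc$-model obtained by iterated blow-ups of intersection strata. That is a strictly stronger output than what is needed, and the statement you rely on --- that any divisorial monomial valuation with positive integer weights is reached by a finite sequence of stellar blow-ups of strata --- is a true but nontrivial combinatorial fact (a multidimensional Euclidean algorithm whose termination must be proved). You correctly identify it as ``the main obstacle'' and ``the point requiring care,'' but then merely assert it is ``purely combinatorial'' without proof or reference. As written, the crux of the hard implication is therefore asserted rather than established; either supply the induction on the weight vector or, more economically, replace the iterated blow-ups by the single subdivision and settle for normality, as the paper does.

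Two smaller points. First, when $y$ lies on exactly $\dim\cX$ components there are no transverse parameters $z_i$, the weight vector is $\alpha$ itself and may fail to be primitive; then $\mathrm{ord}_{E'}=v/\gcd(\alpha)$ and the multiplicity of $E'$ is $d/\gcd(\alpha)$, not $d$. Your final conclusion survives because this still divides $d$, but the intermediate claim ``$E'$ has multiplicity $v(\pi)=d$'' should be corrected (the paper handles this by passing to the primitive generator $\alpha'$ of the ray). Second, your ``shortcut'' in the case $|J|<\dim\cX$ --- the regular curve cut out by $x_j=z_1^{\alpha_j}$ and the vanishing of the remaining transverse parameters, which is finite flat of degree $d$ over $R$ --- is correct and gives a completely elementary proof in all but the zero-dimensional-stratum case; it is arguably simpler than both your general argument and the paper's, and it would be worth promoting it from an afterthought to the main line of the proof, reserving the toroidal machinery for the single remaining case.
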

\begin{proof}
 Assume that \eqref{it:spe} holds.
 We choose a uniformizer $\pi$ of $R$. Since
$\cX$ is regular at $y$, we can choose a local equation $z_j=0$ for $E_j$ at $y$, for every $j\in J$, and write
$$\pi=u\prod_{j\in J}z_j^{N_j}$$ in $\mathcal{O}_{\cX,y}$, with $u$ a unit. If we denote by $v$ the normalized discrete valuation on the residue field at $x$, we find
 \begin{equation}\label{eq:val}
 v(\pi)=\sum_{j\in J}v(z_j(x))N_j.\end{equation} But $v(\pi)$ divides the degree of $x$, and therefore $d$. Moreover,
 $v(z_j(x))>0$ for every $j$ in $J$ since $x$ specializes to a point in the zero locus of $z_j$. Thus, \eqref{it:combi} follows by multiplying \eqref{eq:val} with $d/v(\pi)$.

 Now suppose, conversely, that (2) holds. By Lemma \ref{lem:pointwithmult}, it suffices to find a morphism
 $h:\cX'\to \cX$ of models of $X$ and a closed point $y'$ on $\cX_k'$ such that $h(y')=y$, $\cX'$ and $(\cX')_{\red}$ are regular at $y'$, and the multiplicity of $\cX'_k$ at $y'$ divides $d$.
 We will construct $h$ by means of a toroidal blow-up. The most convenient way to express the construction is the language of log geometry; we will in particular make use of the notion of log regularity developed in \cite{kato1994toric}. For every flat $R$-scheme of finite type $\cY$, we denote by $\cY^+$ the log scheme that we obtain by endowing $\cY$ with the divisorial log structure associated with $\cY_k$. Then $\cX^+$ is log regular at $y$ and the characteristic monoid $\overline{\mathcal{M}}_{\cX^+,y}$ is canonically isomorphic to $\N^J$, because we assumed that $\cX_k$ is a strict normal crossings divisor at $y$. The residue class of $\pi$ in $\overline{\mathcal{M}}_{\cX^+,y}$ is the multiplicity vector $m=(N_j)_{j\in J}$.
  We subdivide the cone $\R_{\geq 0}^J$ into a fan $\Sigma$ by adding the ray generated by $\alpha=(\alpha_j)_{j\in J}$. By \cite[10.3]{kato1994toric}, this subdivision gives rise to a proper birational morphism $h:\cX'\to \cX$ that is an isomorphism over the complement of $y$ and such that $(\cX')^+$  is log regular (in particular, normal) at every point of $E=h^{-1}(y)$. Moreover, $E$ is an irreducible divisor on $\cX'$ and the multiplicity of $\cX'_k$ along $E$ is given by the scalar product $m\cdot \alpha'$
     where $\alpha'$ is a primitive generator of the ray $\R_{\geq 0}\alpha$. By our assumption that $m\cdot \alpha=d$, we know that $m\cdot \alpha'$ divides $d$. Thus, we can take for $y'$ any closed point on $E$ such that $\cX'$ and $(\cX'_k)_{\red}$ are regular at $y'$. Such a point exists because $\cX'$ is normal and hence regular in codimension one.
 \end{proof}

\begin{cor}\label{cor-formulas}
Let $X$ be a regular proper $K$-scheme and assume that $X$ admits an $snc$-model $\cX$, with $\cX_k=\sum_{i\in I}N_iE_i$.
 Then
$$\mathcal{D}_{\cX}=\bigcup_{\emptyset \neq J \subset I, E_J^o \neq \emptyset}N_J\cdot
\Z_{>0}.$$
Moreover, we
have the following equalities:
\begin{eqnarray*}
\nu(X)&=&\min \{N_i\,|\,i\in I\},
\\ \iota(X)&=& \gcd \{N_i\,|\,i\in I\},
\\ \iota_{\spe}(\cX)&=& \min \{N_J\,|\,\emptyset \neq J\subset I,\
E_J^o\neq \emptyset\}.
\end{eqnarray*}
\end{cor}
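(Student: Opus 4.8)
The plan is to deduce all four assertions from Lemma~\ref{lem:pointwithmult} and Proposition~\ref{prop-sncd}, using only two soft inputs about the special fibre: for every $i\in I$ the open subset $E_{\{i\}}^o=E_i\setminus\bigcup_{i'\neq i}E_{i'}$ of $E_i$ is non-empty, since an irreducible component of $\cX_k$ is never contained in the union of the others; and any non-empty $E_J^o$ carries a $k$-rational point, being a non-empty scheme of finite type over the algebraically closed field $k$. Since $\cX$ is an $snc$-model, the hypotheses of Lemma~\ref{lem:pointwithmult} and Proposition~\ref{prop-sncd} are satisfied at every closed point of $\cX_k$, so both may be applied freely below.

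First I would record, for each $i\in I$, that $X$ has a closed point of degree exactly $N_i$: choosing $y\in E_{\{i\}}^o(k)$, the scheme $(\cX_k)_{\red}$ agrees with $E_i$ near $y$ and is therefore regular there, while $\cX_k$ has multiplicity $N_i$ at $y$, so Lemma~\ref{lem:pointwithmult} yields such a point. In particular $\nu(X)\leq\min_{i\in I}N_i$, and since $\iota(X)$ then divides each $N_i$ we get $\iota(X)\mid\gcd_{i\in I}N_i$. For the reverse estimates, let $x$ be an arbitrary closed point of $X$, put $y=\spe_\cX(x)$, and let $J$ be the set of indices of the components of $\cX_k$ through $y$ (so $y\in E_J^o$); Proposition~\ref{prop-sncd}, applied with $d=\deg x$, produces $\alpha\in(\Z_{>0})^J$ with $\deg x=\sum_{j\in J}\alpha_jN_j$. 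As each $\alpha_j\geq 1$ this forces $\deg x\geq\sum_{j\in J}N_j\geq\min_{i\in I}N_i$, and since $\deg x$ is a non-negative integral combination of the $N_j$ with $j\in J\subseteq I$ it is divisible by $\gcd_{i\in I}N_i$, and also by $N_J=\gcd_{j\in J}N_j$. Taking these over all closed points of $X$ gives $\nu(X)=\min_{i\in I}N_i$ and $\iota(X)=\gcd_{i\in I}N_i$.

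Next I would compute $\mathcal{D}_\cX$. For ``$\subseteq$'': given a zero-cycle of positive degree $d$ supported in $\spe_\cX^{-1}(y)$ for some $y\in E_J^o$, each point of its support has degree divisible by $N_J$ by the previous paragraph, hence $d\in N_J\cdot\Z_{>0}$. For ``$\supseteq$'': fix $J$ with $E_J^o\neq\emptyset$, pick $y\in E_J^o(k)$, and write $N_J=\sum_{j\in J}c_jN_j$ with $c_j\in\Z$. For $M$ large enough the tuples $(M+c_j)_{j\in J}$ and $(M)_{j\in J}$ lie in $(\Z_{>0})^J$, so Proposition~\ref{prop-sncd} furnishes closed points $P^+,P^-$ of $X$, both specializing to $y$, with $\deg P^+$ dividing $M\sum_{j\in J}N_j+N_J$ and $\deg P^-$ dividing $M\sum_{j\in J}N_j$; subtracting, $\gcd(\deg P^+,\deg P^-)$ divides $N_J$, so a suitable $\Z$-linear combination of $P^+$ and $P^-$ is a zero-cycle of degree $N_J$ supported in $\spe_\cX^{-1}(y)$. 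Scaling this cycle by an arbitrary integer $q\geq 1$ shows $N_J\cdot\Z_{>0}\subseteq\mathcal{D}_\cX$, and hence, combined with the first inclusion, $\mathcal{D}_\cX=\bigcup_{\emptyset\neq J\subseteq I,\ E_J^o\neq\emptyset}N_J\cdot\Z_{>0}$. Finally $\iota_\spe(\cX)=\min\mathcal{D}_\cX=\min\{N_J\mid\emptyset\neq J\subseteq I,\ E_J^o\neq\emptyset\}$, because $\min(N_J\cdot\Z_{>0})=N_J$.

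Most of this is elementary arithmetic; the one step that needs genuine care is the inclusion ``$\supseteq$'' for $\mathcal{D}_\cX$. A single tuple $\alpha$ does not suffice, since for $|J|\geq 2$ one cannot write $N_J=\sum_{j\in J}\alpha_jN_j$ with all $\alpha_j>0$. One must instead combine two closed points lying in the same fibre $\spe_\cX^{-1}(y)$, whose degrees Proposition~\ref{prop-sncd} only controls up to divisors of $M\sum_{j\in J}N_j+N_J$ and $M\sum_{j\in J}N_j$ respectively; taking the difference of these two values is precisely what forces the gcd of the two degrees to divide $N_J$, making a zero-cycle of degree $N_J$ available in the single fibre over $y$.
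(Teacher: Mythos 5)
Your argument is correct and follows the paper's route: the paper simply says the corollary ``follows at once from Proposition~\ref{prop-sncd}'', and everything you do is a careful unwinding of that proposition (plus Lemma~\ref{lem:pointwithmult} for points of degree exactly $N_i$). You rightly identify that the inclusion $N_J\cdot\Z_{>0}\subseteq\mathcal{D}_{\cX}$ is the one step that is not literally immediate for $|J|\geq 2$, and your device of combining two closed points over the same $y\in E_J^o$ with degrees dividing $M\sum_j N_j+N_J$ and $M\sum_j N_j$ settles it correctly.
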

\begin{proof}
 This follows at once from Proposition \ref{prop-sncd} (and the formulas for $\nu(X)$ and $\iota(X)$ are well-known).
\end{proof}

We will now prove that the specialization index can be computed on any
$snc$-model. For this, we will need the following combinatorial lemma.

\begin{lemma}
Let $S$ be a subset of $\Z_{>0}$ and suppose that $S$ is the union of
finitely many sub-semigroups $S_1, \ldots, S_t$ of $(\N,+)$. If we set
$s_i = \gcd S_i$ for every $i\in \{1,\ldots,t\}$,
 then the value  $$s_{\min}=\min\{s_i\mid i\in \{1,\ldots,t\}\,\}$$ is independent of the choice of the $S_i$.
\label{lem-comb}
\end{lemma}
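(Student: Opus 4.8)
The plan is to identify $s_{\min}$ with an invariant that can be read off directly from $S$, with no reference to the decomposition. For a positive integer $d$, say that $S$ \emph{absorbs} $d$ if there is an integer $N$ with $md\in S$ for all $m\geq N$, and let $d(S)$ be the smallest positive integer absorbed by $S$. Once we know that some integer is absorbed, $d(S)$ is well defined and manifestly depends only on $S$, so it suffices to prove $s_{\min}=d(S)$.

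The one external ingredient we will use is the classical fact that a non-empty sub-semigroup $G$ of $(\N,+)$ with $g=\gcd G$ contains every sufficiently large multiple of $g$. Indeed, after dividing by $g$ one reduces to $g=1$; a B\'ezout relation among finitely many elements of $G$ then yields elements $A,B\in G$ with $A=B+1$, and for $n\geq B^2$ the division $n=qB+s$ with $0\leq s<B$ gives $n=(q-s)B+sA$ with $q-s\geq 1$, exhibiting $n$ as a positive combination of $A$ and $B$, hence as an element of $G$. Applying this to $S_{i_0}$, where $i_0$ is an index with $s_{i_0}=s_{\min}$, we get $S\supseteq S_{i_0}\supseteq\{\,ms_{\min}\mid m\geq N\,\}$ for some $N$; thus $S$ absorbs $s_{\min}$ and $d(S)\leq s_{\min}$.

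For the opposite inequality, suppose $S$ absorbs some $d$, so $md\in S$ for all $m\geq N$. Since $S_i\subseteq s_i\Z$, each such $md$ lies in $s_i\Z$ for some $i$; setting $e_i=s_i/\gcd(s_i,d)$, this means that every integer $m\geq N$ is divisible by one of $e_1,\dots,e_t$. The heart of the argument is then the elementary remark that finitely many integers, all of which are $\geq 2$, cannot have the property that every large integer is a multiple of one of them: picking a prime $p$ that divides none of the $e_i$, the powers $p,p^2,p^3,\dots$ form an unbounded set of integers none of which is divisible by any $e_i$. Hence some $e_i$ equals $1$, i.e.\ $s_i\mid d$, so $d\geq s_i\geq s_{\min}$.

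Taking $d=d(S)$ gives $d(S)\geq s_{\min}$, whence $s_{\min}=d(S)$, which depends only on $S$. The only genuine obstacle is the combinatorial step in the previous paragraph; the rest is routine manipulation around the numerical-semigroup fact.
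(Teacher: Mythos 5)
Your proof is correct and follows essentially the same route as the paper's: both arguments rest on the classical fact that a numerical semigroup contains every sufficiently large multiple of its gcd, combined with the observation that if all large multiples of $d$ land in $\bigcup_i S_i$ then some $s_i$ must divide $d$. The only differences are in packaging --- you factor the comparison through the explicit invariant $d(S)$ rather than comparing two decompositions directly, and you spell out (via the prime-power trick) the final divisibility step, which the paper asserts without justification.
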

\begin{proof}
 Let $S'$ be a sub-semigroup of $(\N,+)$ that is contained in $S$. We set $s'=\gcd S'$. Then it suffices to show that
 $s_{\min}\leq s'$.  We will prove that, for every sufficiently large positive integer $m$, the element $ms'$ lies in $S'$, and thus in one of the semigroups $S_i$.
  This means that $ms'$ is divisible by one of the $s_i$ for every sufficiently large $m$, so that $s'$ itself must be divisible by one of the $s_i$ and, in particular,
  $s_{\min} \leq s'$.

   By Bezout's lemma, we can write $$s' = \sum_{j=1}^n \lambda_j x_j$$ with
$\lambda_j \in \Z$ and $x_j \in S'$. Reordering terms, we may assume that there exists an integer
 $q$ such that $1\leq q\leq n$, $\lambda_j>0$ for $j\leq q$, and $\lambda_j<0$ for $j>q$. Now set
 $$m_0=-(x_1-1)\sum_{j=q+1}^n\lambda_j(x_j/s')$$ and choose any integer $m> m_0$.
  We consider the Euclidean division $m-m_0=\alpha (x_1/s') +r$ with $\alpha$ a non-negative integer and $r$ an element in $\{0,\dots,(x_1/s')-1\}$.
  Then we can write
\begin{eqnarray*}
ms'&=&\alpha x_1 + rs' - (x_1-1)\sum_{j=q+1}^n \lambda_j x_j
\\ &=& \alpha x_1 +r\sum_{j=1}^q \lambda_jx_j - (x_1-1-r)\sum_{j=q+1}^n \lambda_j x_j
\end{eqnarray*}
 which is a positive linear combination of the elements $x_1,\ldots,x_n$ of the semi-group $S'$. It follows that $ms'$ lies in $S'$ as soon as $m> m_0$.
 \end{proof}

\begin{prop}\label{prop-indsncd} Let $X$ be a regular proper $K$-scheme and assume that every model of $X$ can be dominated by
an  $snc$-model. Then for every $snc$-model $\cX$ of $X$,
 we have $\iota_{\spe}(X)=\iota_{\spe}(\cX)$.
\end{prop}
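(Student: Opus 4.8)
The plan is to show that $\iota_{\spe}$ takes the same value on every $snc$-model of $X$, and that this common value equals $\iota_{\spe}(X)$. Two preliminary observations are needed. First, since by hypothesis every model of $X$ is dominated by an $snc$-model, and since $\mathcal{D}_{\cX'}\subseteq\mathcal{D}_{\cX''}$ whenever $\cX'$ dominates $\cX''$, the $snc$-models are cofinal among all models of $X$; hence $\mathcal{D}_X=\bigcap\mathcal{D}_{\cX'}$, the intersection ranging over all $snc$-models $\cX'$ of $X$. Second, any two models $\cX_1,\cX_2$ of $X$ are dominated by a common model: take the scheme-theoretic closure of $X$, embedded diagonally via the two identifications of the generic fibres, inside $\cX_1\times_R\cX_2$; this is proper and flat over $R$ with generic fibre $X$, and the two projections are morphisms of models. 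Combined with the hypothesis, this shows that any two $snc$-models of $X$ are dominated by a common $snc$-model.

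The heart of the argument is the claim that $\iota_{\spe}(\cX_1)=\iota_{\spe}(\cX_2)$ for all $snc$-models $\cX_1,\cX_2$ of $X$. Choosing an $snc$-model $\cX_3$ dominating both and applying the following to each of $\cX_1,\cX_2$ in place of $\cX_1$, it suffices to show: if $\cX_3$ dominates $\cX_1$ (both $snc$-models), then $\iota_{\spe}(\cX_1)=\iota_{\spe}(\cX_3)$. Since $\cX_3$ dominates $\cX_1$ we have $\mathcal{D}_{\cX_3}\subseteq\mathcal{D}_{\cX_1}$, so $\iota_{\spe}(\cX_3)=\min\mathcal{D}_{\cX_3}\geq\min\mathcal{D}_{\cX_1}=\iota_{\spe}(\cX_1)$. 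For the reverse inequality, recall from Corollary \ref{cor-formulas} that $\mathcal{D}_{\cX_1}$ and $\mathcal{D}_{\cX_3}$ are both finite unions of sub-semigroups of $(\N,+)$ of the form $N_J\cdot\Z_{>0}$, the minimum of the greatest common divisors of these pieces being $\iota_{\spe}(\cX_1)$, respectively $\iota_{\spe}(\cX_3)$. As $\mathcal{D}_{\cX_3}\subseteq\mathcal{D}_{\cX_1}$, the set $\mathcal{D}_{\cX_1}$ is exhibited as a finite union of sub-semigroups of $(\N,+)$ in two ways, namely using only the pieces coming from $\cX_1$, or using those of $\cX_1$ together with those of $\cX_3$; the minimum of the greatest common divisors is $\iota_{\spe}(\cX_1)$ for the first presentation and $\min\{\iota_{\spe}(\cX_1),\iota_{\spe}(\cX_3)\}$ for the second. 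By Lemma \ref{lem-comb} these two minima coincide, hence $\iota_{\spe}(\cX_1)\leq\iota_{\spe}(\cX_3)$, and equality follows.

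It remains to conclude. Let $\cX$ be the given $snc$-model and put $c=\iota_{\spe}(\cX)$; by the claim, $c=\iota_{\spe}(\cX')$ for every $snc$-model $\cX'$ of $X$. As $\cX$ is in particular a model of $X$, we already know $\iota_{\spe}(X)\geq c$. Conversely, for every $snc$-model $\cX'$ the integer $c=\iota_{\spe}(\cX')=\min\mathcal{D}_{\cX'}$ lies in the nonempty set $\mathcal{D}_{\cX'}$; indeed, by Corollary \ref{cor-formulas} it is one of the integers $N_J$ with $E_J^o\neq\emptyset$. Therefore $c\in\bigcap\mathcal{D}_{\cX'}=\mathcal{D}_X$, so $\iota_{\spe}(X)\leq c$, and finally $\iota_{\spe}(X)=c=\iota_{\spe}(\cX)$.

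I expect the main obstacle to be the middle step: producing a common dominating $snc$-model of two given $snc$-models (which relies on the resolution-of-singularities input packaged into the hypothesis) and then extracting the equality $\iota_{\spe}(\cX_1)=\iota_{\spe}(\cX_3)$ from the purely combinatorial Lemma \ref{lem-comb}. The surrounding manipulations, i.e. the cofinality of $snc$-models and the formal properties of the domination order, are routine.
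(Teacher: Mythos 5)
Your proof is correct, and it rests on the same two ingredients as the paper's argument --- the description of $\mathcal{D}_{\cX}$ as a finite union of semigroups $N_J\cdot\Z_{>0}$ from Corollary \ref{cor-formulas}, and the combinatorial Lemma \ref{lem-comb} --- but it deploys the lemma differently. The paper applies Lemma \ref{lem-comb} once, to the model-\emph{independent} set $S$ of all multiples of degrees of closed points of $X$: Proposition \ref{prop-sncd} identifies the semigroup $S_J$ of strictly positive integer combinations of the $N_j$, $j\in J$, with the set of multiples of degrees of points specializing into $E_J^o$, so every $snc$-model decomposes the \emph{same} set $S$ into finitely many semigroups, and the lemma immediately makes $\min\{N_J\}=\iota_{\spe}(\cX)$ independent of the model with no need to compare two models directly. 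You instead apply the lemma to the model-\emph{dependent} set $\mathcal{D}_{\cX_1}$, and therefore must first manufacture a common dominating $snc$-model (via the schematic closure of the diagonally embedded $X$ in $\cX_1\times_R\cX_2$, followed by the resolution hypothesis) in order to obtain the inclusion $\mathcal{D}_{\cX_3}\subseteq\mathcal{D}_{\cX_1}$ that lets you present $\mathcal{D}_{\cX_1}$ as a union of semigroups in two ways. Both routes are sound: the join construction is standard and your flatness and properness claims for it hold, and your concluding step (that $\iota_{\spe}(\cX')\in\mathcal{D}_{\cX'}$ for every $snc$-model $\cX'$, whence the common value lies in $\mathcal{D}_X$) makes explicit a point the paper leaves implicit. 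The obstacle you flagged is real but avoidable: the paper's choice of the intrinsic set $S$ sidesteps the common-refinement step entirely.
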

\begin{proof}
 We have already observed that $\mathcal{D}(\cX')\subseteq \mathcal{D}(\cX)$ if $\cX'$ is a model of $X$ that dominates $\cX$. Thus we only need to
 argue that $\iota_{\spe}(\cX)$ is independent of the choice of the $snc$-model $\cX$.
 We write
 $$\cX_k=\sum_{i\in I}N_iE_i$$ and we
  denote by $S$ the subset of $\Z_{>0}$ consisting of all the multiples of the degrees of the closed points on $X$.
 For
every non-empty subset $J$ of $I$, we set
$$S_J = \left\{ \sum_{j \in J} \alpha_j N_j \mid \alpha_j \in \Z_{>0} \mbox{ for all }j\in J\right\}\subset \Z_{>0}.$$
 Then $N_J=\gcd S_J$. We denote by $\mathcal{J}$ the set of non-empty subsets $J$ of $I$ such that $E_J^o\neq \emptyset$.
 If $J$ belongs to $\mathcal{J}$, then by Proposition \ref{prop-sncd}, the set $S_J$ consists precisely of the multiples of the degrees of the closed points
on $X$ that specialize to a point on $E^o_J$. Since the sets $E^o_J$ form a partition of $\cX_k$, we find that
  $$S = \bigcup_{J\in \mathcal{J}} S_J.$$  Now Lemma
\ref{lem-comb} implies that  $\min \{N_J\mid J\in \mathcal{J}\}$
 is independent of the choice of the $snc$-model $\cX$; but this is precisely $\iota_{\spe}(\cX)$, by Corollary \ref{cor-formulas}.
\end{proof}
\begin{cor}\label{cor-computsp}
Let $X$ be a regular proper $K$-scheme and assume that every model of $X$ can be dominated by
an  $snc$-model. If $\cX$ is an $snc$-model of $X$, with
$$\cX_k=\sum_{i\in I}N_i E_i,$$ then the specialization index of $X$ can be computed as
$$\iota_{\spe}(X)= \min \{N_J\,|\,\emptyset \neq J\subset I,\
E_J^o\neq \emptyset\}.$$
\end{cor}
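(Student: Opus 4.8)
The plan is to obtain the formula by combining the two results immediately preceding it, since essentially all of the work has already been done. First I would invoke Proposition \ref{prop-indsncd}: the standing hypothesis that every model of $X$ can be dominated by an $snc$-model is exactly what is needed there to conclude that the quantity $\iota_{\spe}(\cX)$ does not depend on the choice of $snc$-model $\cX$, and that this common value equals $\iota_{\spe}(X)$. In other words, under the hypotheses of the corollary, computing the specialization index of $X$ reduces to computing $\iota_{\spe}(\cX)$ for a single, arbitrarily chosen $snc$-model.

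Next I would apply Corollary \ref{cor-formulas}, which gives the explicit description $\iota_{\spe}(\cX)=\min\{N_J\mid \emptyset\neq J\subset I,\ E_J^o\neq\emptyset\}$ in terms of the multiplicities of the components $E_i$ of $\cX_k$ and the combinatorics of their mutual intersections (recall $N_J=\gcd\{N_j\mid j\in J\}$ and that the strata $E_J^o$ partition $\cX_k$). Substituting this expression into the equality $\iota_{\spe}(X)=\iota_{\spe}(\cX)$ from the previous paragraph yields precisely the claimed formula. One small point worth making explicit in the write-up is that the right-hand side of the formula is, a priori, attached to a specific $\cX$, so the content of the corollary is exactly that this minimum is a well-defined invariant of $X$ alone — which is guaranteed by Proposition \ref{prop-indsncd}.

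I do not expect any genuine obstacle at this stage: the corollary is a formal consequence of Proposition \ref{prop-indsncd} and Corollary \ref{cor-formulas}, and the proof is a single sentence citing both. The real substance lies upstream — in the local analysis of Proposition \ref{prop-sncd} via toroidal (log-geometric) blow-ups, which translates the existence of a closed point specializing to $y$ into the arithmetic condition $\sum_{j\in J}\alpha_j N_j=d$, and in the combinatorial Lemma \ref{lem-comb}, which ensures that the minimal $\gcd$ over the covering semigroups $S_J$ is independent of the chosen decomposition. Both of these are already available, so the proof of Corollary \ref{cor-computsp} itself is immediate.
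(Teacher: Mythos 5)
Your argument is correct and is exactly the paper's proof: Corollary \ref{cor-computsp} is obtained by combining Proposition \ref{prop-indsncd} (independence of the $snc$-model and equality with $\iota_{\spe}(X)$) with the formula for $\iota_{\spe}(\cX)$ from Corollary \ref{cor-formulas}. Your remarks about where the real work lies (Proposition \ref{prop-sncd} and Lemma \ref{lem-comb}) accurately reflect the structure of the paper.
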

\begin{proof}
This is a combination of Proposition \ref{prop-indsncd} and Corollary \ref{cor-formulas}.
\end{proof}
 We recall that the assumption in the statements of Proposition \ref{prop-indsncd} and Corollary \ref{cor-computsp} is satisfied when $p=1$, or $\mathrm{dim}(X)\leq 2$ and $R$ is excellent (see \eqref{sss:res}). One can reduce to the excellent case by means of the following result.

\begin{prop}\label{prop:complete}
Denote by $\widehat{R}$ the completion of $R$ and by $\widehat{K}$ its quotient field. Let $X$ be a regular proper $K$-scheme, and assume that every $\widehat{R}$-model of $X\times_K \widehat{K}$ can be dominated by a regular $\widehat{R}$-model.
 Then $\iota_{\spe}(\cX)=\iota_{\spe}(\cX\times_R \widehat{R})$ for every $R$-model $\cX$ of $X$, and
 $\iota_{\spe}(X)=\iota_{\spe}(X\times_K \widehat{K})$.
\end{prop}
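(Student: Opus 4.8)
The plan is to deduce both assertions from the single equality
\[
\mathcal{D}_{\cX}=\mathcal{D}_{\cX\times_R\widehat{R}}\qquad\text{for every }R\text{-model }\cX\text{ of }X,
\]
together with the (elementary) fact that the base changes $\cX\times_R\widehat R$ are cofinal among the $\widehat R$-models of $\widehat X:=X\times_K\widehat K$, in the sense that every $\widehat R$-model of $\widehat X$ is dominated by some $\cX\times_R\widehat R$. The first assertion of the proposition is then immediate from the displayed equality. For the second, write $\widehat\cX:=\cX\times_R\widehat R$; then $\mathcal D_{\widehat X}=\bigcap_{\widehat\cY}\mathcal D_{\widehat\cY}\subseteq\bigcap_{\cX}\mathcal D_{\widehat\cX}=\bigcap_{\cX}\mathcal D_{\cX}=\mathcal D_X$, and conversely, given $d\in\mathcal D_X$ and any $\widehat R$-model $\widehat\cY$, choosing $\cX$ with $\widehat\cX$ dominating $\widehat\cY$ yields $d\in\mathcal D_X\subseteq\mathcal D_{\cX}=\mathcal D_{\widehat\cX}\subseteq\mathcal D_{\widehat\cY}$, whence $d\in\mathcal D_{\widehat X}$. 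The cofinality itself I would obtain by observing that any $\widehat R$-model $\widehat\cY$ and any fixed base change $\overline\cX\times_R\widehat R$ are dominated by a blow-up of $\overline\cX\times_R\widehat R$ along a coherent ideal $\widehat{\mathcal I}$ supported in the special fibre; since then $\pi^n\in\widehat{\mathcal I}$ for some $n$ and $R/\mathfrak m_R^n=\widehat R/\mathfrak m_{\widehat R}^n$, the ideal $\widehat{\mathcal I}$ is induced from an ideal $\mathcal I\ni\pi^n$ on $\overline\cX$, and the $R$-flat quotient of the blow-up of $\overline\cX$ along $\mathcal I$ is an $R$-model of $X$ whose base change to $\widehat R$ dominates $\widehat\cY$.

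It remains to prove the displayed equality. The preliminary observations are that $\widehat\cX_k$ is canonically $\cX_k$, so that $\widehat\cX(k)=\cX(k)$ and $\OO_{\widehat\cX,y}=\OO_{\cX,y}$ for every $y\in\cX(k)$, and that the specialization maps are compatible: for a closed point $x$ of $X$ and a residue field $L$ of $\kappa(x)\otimes_K\widehat K$, the valuation ring of $L$ dominates that of $\kappa(x)$, which dominates $R$, so by the valuative criterion the corresponding closed point of $\widehat X$ specializes, in $\widehat\cX$, to $\spe_\cX(x)$. The inclusion $\mathcal D_\cX\subseteq\mathcal D_{\widehat\cX}$ is now easy: the base change to $\widehat K$ of a zero-cycle of degree $d$ on $X$ supported on $\spe_\cX^{-1}(y)$ is a zero-cycle on $\widehat X$, still of degree $d$ because $\dim_{\widehat K}\bigl(\kappa(x)\otimes_K\widehat K\bigr)=[\kappa(x):K]$ for every closed point $x$, and supported on $\spe_{\widehat\cX}^{-1}(y)$.

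The reverse inclusion $\mathcal D_{\widehat\cX}\subseteq\mathcal D_\cX$ is the heart of the proof, and the only place the resolution hypothesis is used. I would reduce it to the assertion that for every closed point $\widehat x$ of $\widehat X$ with $\spe_{\widehat\cX}(\widehat x)=y$ there is a closed point $x$ of $X$ with $\spe_\cX(x)=y$ and $\deg_K x\mid\deg_{\widehat K}\widehat x$: applying this to the points in the support of a zero-cycle realizing a given $d\in\mathcal D_{\widehat\cX}$ and multiplying its coefficients by the integers $\deg_{\widehat K}\widehat x/\deg_K x$ produces a zero-cycle of degree $d$ on $X$ inside one specialization fibre of $\cX$. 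To prove the assertion, I would first use the hypothesis, descent of regularity along the faithfully flat morphism $R\to\widehat R$, and the descent of blow-ups above, to replace $\cX$ by a dominating $R$-model which is regular. Then, with $y$ replaced by the point above it, $\OO_{\widehat\cX,y}=\OO_{\cX,y}$ is a regular complete local ring in which $\pi$ is the product of the local branches of $\cX_k$ raised to their multiplicities, and $\deg_{\widehat K}\widehat x=v_{\widehat x}(\pi)$ (with $v_{\widehat x}$ the normalized valuation of $\kappa(\widehat x)$) is a positive integral combination of those multiplicities. Provided $\cX_k$ is a strict normal crossings divisor at $y$, the toroidal blow-up construction of Proposition \ref{prop-sncd} and Lemma \ref{lem:pointwithmult} — both statements over the henselian ring $R$ — then produce the required closed point $x$ of $X$ over $K$ directly.

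The main obstacle is exactly the hypothesis "$\cX_k$ a strict normal crossings divisor at $y$": for a model that is regular but not $snc$ at $y$ — which can occur only when $p\ge 2$ and $\dim X\ge 3$ — one would instead need an analogue of Proposition \ref{prop-sncd} for regular models, or a direct combinatorial description of $\mathcal D_\cX$ in the spirit of Lemma \ref{lem-comb}, to handle specialization points on a non-$snc$ locus. In the range where the hypothesis of the proposition is actually known to hold, namely $p=1$ or $\dim X\le 2$, this obstacle does not arise: there $\widehat R$ is excellent, so the dominating $\widehat R$-model may be taken to be an $snc$-model (by Hironaka, resp.\ Cossart--Jannsen--Saito), the descent above then yields an $snc$-model over $R$, and everything reduces to the $snc$ case just described.
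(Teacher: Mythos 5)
Your global strategy --- cofinality of the base-changed models among the $\widehat{R}$-models via descent of admissible blow-ups, reduction of both assertions to the single equality $\mathcal{D}_{\cX}=\mathcal{D}_{\cX\times_R\widehat{R}}$, and reduction of the hard inclusion to producing, for each closed point $\widehat{x}$ of $X\times_K\widehat{K}$, a closed point $x$ of $X$ whose degree divides $\deg\widehat{x}$ and which has the same specialization --- coincides with the paper's. The gap is in the step where you ``replace $\cX$ by a dominating $R$-model which is regular'' (and $snc$ at the relevant point $y$). The hypothesis only furnishes a regular model $\cY$ over $\widehat{R}$ dominating $\cX\times_R\widehat{R}$, and nothing forces $\cY$ to be of the form $\cZ\times_R\widehat{R}$; your admissible-blow-up argument shows only that $\cY$ is \emph{dominated by} some $\cZ\times_R\widehat{R}$, and domination does not transmit regularity downwards, so faithfully flat descent of regularity never gets to be applied. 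The same problem recurs in your last paragraph: for $p\geq 2$ and $\dim X\leq 2$, an $snc$-model over the excellent ring $\widehat{R}$ is not known to be obtained from $\cX\times_R\widehat{R}$ by admissible blow-ups (the paper is explicitly cautious about the precise output of Cossart--Jannsen--Saito in \eqref{sss:res}), so ``the descent above'' does not yield an $snc$-model over $R$. Note that if such a descent worked, Proposition \ref{prop:complete} would be largely superfluous: one could resolve over $R$ directly and invoke Corollary \ref{cor-computsp} there.

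The paper circumvents this by doing all the work that requires regularity over $\widehat{R}$ and descending only a single divisor at the very end. Starting from a regular $\widehat{R}$-model $\cY$ dominating $\cX\times_R\widehat{R}$, it repeatedly blows up the closed point $\spe_{\cY}(\widehat{x})$ until this specialization lies on a unique irreducible component of the special fiber, of some multiplicity $N$ dividing $d=\deg\widehat{x}$ (the multiplicity of the exceptional divisor strictly increases as long as the center meets several components, yet remains bounded by $d$, so the process terminates). One further point blow-up $\cY'\to\cY$ creates an exceptional divisor $E'$ of multiplicity $N$ lying over $\spe_{\cX\times_R\widehat{R}}(\widehat{x})$. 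Only now does one dominate $\cY'$ by a model $\cY''\cong\cZ\times_R\widehat{R}$ defined over $R$: the strict transform $E''$ of $E'$ is an irreducible component of $\cZ_k$ of multiplicity $N$, and $\cZ\times_R\widehat{R}$ (hence $\cZ$, by flat descent) is regular at its generic point, because the local ring of $\cY'$ at the generic point of $E'$ is a discrete valuation ring and therefore cannot be properly dominated by the corresponding local ring of $\cY''$. Lemma \ref{lem:pointwithmult}, applied over the henselian ring $R$ at a suitable closed point of $E''$, then yields the required point $x$ of degree $N$. This local, generic-point form of descent --- rather than descent of a globally regular or $snc$ model --- is exactly the idea your proposal is missing.
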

\begin{proof}
 We first show that the base changes to $\widehat{R}$ of the $R$-models of $X$ are cofinal in the partially ordered set of isomorphism classes of $\widehat{R}$-models of $X\times_K \widehat{K}$. If we fix any $R$-model $\cX$ of $X$, then we can dominate every $\widehat{R}$-model of $X\times_K \widehat{K}$ by an admissible blow-up of $\cX\times_R \widehat{R}$ (that is, the blow-up of an ideal that contains a power of the maximal ideal of $R$; this is one of the cornerstones of Raynaud's formal-rigid geometry \cite[4.1]{formrig}, and it also holds for proper algebraic schemes by formal GAGA and the fact that admissible blow-ups commute with formal completion). But every admissible blow-up of
 $\cX\times_R \widehat{R}$ is defined over $R$, since blowing up commutes with flat base change. This implies that it is enough to prove the first assertion in the statement.

 Let $\cX$ be an $R$-model of $X$. It is obvious that $\iota_{\spe}(\cX)\geq \iota_{\spe}(\cX\times_R \widehat{R})$. Thus it suffices to show that, for every positive integer $d$ and every closed point $\widehat{x}$ of degree $d$ on $X\times_K \widehat{K}$, there exists a closed point
  $x$ on $X$ whose degree divides $d$ and such that
$$\spe_{\cX\times_R \widehat{R}}(\widehat{x})=\spe_{\cX}(x).$$  Let $\cY$ be a regular $\widehat{R}$-model of $X$ that dominates $\cX\times_R \widehat{R}$.
 By repeatedly blowing up $\cY$ at the specialization of $\widehat{x}$, we can arrange that $\spe_{\cY}(\widehat{x})$ lies on a unique irreducible component of $\cY_k$, whose multiplicity $N$ must divide $d$ (note that the multiplicity of the exceptional divisor will strictly increase as long as the center of the blow-up lies on more than one component; but this multiplicity is also bounded by $d$). Let $\cY'$ be the blow-up of $\cY$ at $\spe_{\cY}(\widehat{x})$, and denote by $E'$ the exceptional divisor.
  By the first part of the proof, we can dominate $\cY'$ by a model $\cY"$ that is defined over $R$ (but which is not necessarily regular);
   we write $\cY"\cong \cZ\times_R \widehat{R}$ for some model $\cZ$ of $X$ over $R$. If we denote by $E"$ the strict transform of $E'$ in $\cY"$, then we can also view $E"$ as an irreducible component of $\cZ_k$. This component still has multiplicity $N$, and $\cZ$ is regular at the generic point of $E"$ since $\cY'$ is regular at the generic point of $E'$. Now it follows from Lemma \ref{lem:pointwithmult} that there exists a point $x$ on $X$ of degree $N$ such that $\spe_{\cZ}(x)$ lies on $E"$, which implies that
 $$\spe_{\cX\times_R \widehat{R}}(\widehat{x})=\spe_{\cX}(x).$$
\end{proof}

\section{Specialization indices of curves}\label{sec-curves}

\sss In this section, we will show that the specialization index $\iota_{\spe}$ is a
strictly finer invariant than the index $\iota$.
 Our examples will all be
constructed with the help of Winters's classification of reduction types of curves. Let $X$ be a smooth, proper, geometrically connected $K$-curve, and let $\cX$ be an $snc$-model of $X$. We write $$\cX_k=\sum_{i=1}^r N_iE_i,$$ we denote by $G_i$ the genus of $E_i$, for every $i$, and we set $N=(N_1,\ldots,N_r)$ and $G=(G_1,\ldots,G_r)$. Moreover, we set $c_{ij}=E_i\cdot E_j$ for all $i,j$. Then the matrix
$C=(c_{ij})_{i,j}$ is a symmetric $r\times r$ matrix with integer coefficients, and $C\cdot N^t=0$ because $E_i\cdot \cX_k=0$ for every $i$. We say that $\cX$ has {\em reduction type} $(N,G,C)$.

\begin{theorem}[Winters]\label{thm:winters}
Assume that $k$ has characteristic zero and that $R$ is the henselization of $\A^1_k$ at the origin. Let $r$ be a positive integer, let $N$ be a primitive vector in $\Z_{>0}^r$, let $G$ be an element of $\N^r$ and let $C=(c_{ij})$ be a symmetric $r\times r$ matrix with integer coefficients  such that $c_{ij}\geq 0$ for $i\neq j$ and $C\cdot N^t=0$. Then there exist a
 smooth, proper, geometrically connected $K$-curve $X$ and an $snc$-model $\cX$ of $X$ of reduction type $(N,G,C)$.
  Moreover, the genus $g(X)$ of $X$ can be computed from the formula $$2-2g(X)=\sum_{i=1}^r N_i(2-2G_i+c_{ii}).$$
\end{theorem}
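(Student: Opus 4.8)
The plan is to construct the curve $X$ by a deformation/semistable-reduction argument, producing first a flat proper family over a curve whose central fibre is a nodal curve with the prescribed dual graph, then extracting an $snc$-model with the right multiplicities by a base change and normalization. First I would record the combinatorial input: since $C$ is symmetric with $C\cdot N^t=0$, $c_{ij}\ge 0$ for $i\ne j$, and $N$ is primitive, the data $(N,G,C)$ is exactly what is needed to build a \emph{stable-reduction-type} configuration, where one thinks of $E_i$ as a curve of genus $G_i$ meeting the other components in $c_{ij}$ points and carrying multiplicity $N_i$. The diagonal entries $c_{ii}$ are then forced to be negative (from $C\cdot N^t = 0$), and the arithmetic genus formula $2-2g(X)=\sum_i N_i(2-2G_i+c_{ii})$ is just the adjunction/Riemann--Roch computation on the total space once the model is known to be regular with the stated fibre, so that part will follow formally at the end.

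The main construction proceeds as follows. First I would realize the configuration over the base $\Spec R$ with $R = \O_{\A^1_k,0}^h$ (or equivalently work over a small disc, using that $k$ has characteristic zero so Hironaka applies). One builds an auxiliary family whose special fibre is a reduced nodal curve $Y_0 = \sum_i E_i'$ — here $E_i'$ is a smooth projective curve of genus $G_i$ — glued according to the intersection pattern $c_{ij}$; existence of such a nodal curve and of a one-parameter smoothing of it over $R$ with smooth geometrically connected generic fibre is a standard deformation-theoretic fact (deformations of nodal curves are unobstructed, and one can prescribe which nodes to smooth). To get the multiplicities $N_i$ one does \emph{not} smooth all nodes at once; instead one uses the classical device of taking a cyclic base change ramified to order $\mathrm{lcm}(N_i)$ along the special fibre and normalizing, which replaces each component $E_i'$ (after suitable local modifications at the nodes) by a component of multiplicity $N_i$ — this is where the condition $C\cdot N^t = 0$ enters as the numerical self-consistency of the multiplicities, and where the primitivity of $N$ guarantees that no common factor can be extracted, i.e.\ that the generic fibre one obtains is genuinely defined over $K$ and not over a proper subextension. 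After this base change the total space acquires $A_n$-type singularities at the points lying over the old nodes; resolving these by the standard chain-of-$\Pro^1$'s resolution yields a regular model, but inserts extra chains of rational components between the $E_i$. To land on the reduction type $(N,G,C)$ on the nose — with \emph{only} the components $E_i$, intersection numbers exactly $c_{ij}$, and no interposed chains — one arranges at the outset that along each node the two multiplicities are arranged so that after normalization the surface is already regular there (this is possible precisely because $c_{ij}$ can be taken as large as needed: one splits a single intersection point into $c_{ij}$ transverse ones, and chooses the local smoothing parameters so that $\gcd$ conditions at each node are met), so that the resolution step is trivial.

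The hardest step, and the one I would spend the most care on, is this last matching: showing that one can realize \emph{precisely} the matrix $C$ (not merely some matrix in the same ``resolution class'') with $snc$ total space and no auxiliary components. Winters's actual argument handles this by a clever explicit plumbing construction — gluing together standard local models $\Spec R[x,y]/(xy - \pi^{?})$ and line-bundle total spaces over the $E_i'$ — and checking directly that the result is regular with the asserted special fibre; I would follow that route, treating the global existence of the line bundles of the correct degrees (the degrees are constrained by $C$) via the fact that on a smooth projective curve every integer is a degree of a line bundle, and handling connectedness of the generic fibre from connectedness of the dual graph of $\cX_k$ (which follows since $\cX$ is connected and $\cX_K$ is a smooth curve). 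Once the model $\cX$ is in hand, regular with $\cX_k = \sum N_i E_i$ an $snc$ divisor, the genus formula is immediate: compute $2g(X)-2 = \deg(\omega_{\cX/R}|_{\cX_K})$ by adjunction on each $E_i$ inside the total space, $\omega_{\cX/R}\cdot E_i = 2G_i - 2 - E_i^2 = 2G_i - 2 - c_{ii}$, weight by $N_i$, and sum, using $\sum N_i E_i \cdot E_i$-type relations from $C\cdot N^t=0$ to collapse the cross terms — giving exactly $2-2g(X) = \sum_i N_i(2-2G_i + c_{ii})$.
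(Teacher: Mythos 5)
The paper does not actually reprove this statement: existence of the pair $(X,\cX)$ is quoted directly from Winters \cite[3.7 and 4.3]{winters}, and the genus formula is obtained by adjunction and Riemann--Roch as in \cite[3.1]{nicaisetame}. Your final computation of the genus formula is correct and is essentially that cited argument: adjunction on $E_i\subset\cX$ gives $\omega_{\cX/R}\cdot E_i=2G_i-2-c_{ii}$, and pairing $\omega_{\cX/R}$ with $\cX_k=\sum_iN_iE_i$ and using flatness to identify the result with $\deg\omega_X=2g(X)-2$ yields the formula (note, incidentally, that $C\cdot N^t=0$ is not needed to ``collapse cross terms'' here --- there are none).

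The existence part of your proposal, however, has a genuine gap at its central step. You propose to start from a \emph{reduced} nodal special fibre $\sum_iE_i'$ and to create the multiplicities $N_i$ by ``taking a cyclic base change ramified to order $\lcm(N_i)$ along the special fibre and normalizing.'' That device goes in the wrong direction. At a point of a reduced component with local equation $\pi=uy$ ($u$ a unit), the base change $R\to R(d)$ produces the local ring $\O_{\cX,x}[\pi']/(\pi'^d-uy)$, which is already regular with special fibre $V(\pi')=V(y)$ still reduced; at a node $xy=\pi$ one gets an $A_{d-1}$-singularity whose resolution inserts a chain of reduced rational curves. More generally, base change followed by normalization \emph{divides} multiplicities rather than creating them --- this is exactly the semistable-reduction mechanism, and it is the content of Proposition \ref{prop-powerp} of this paper, where the fibre of the normalization over a point of $E_J^o$ has $N_J$ points. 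No local modification at the nodes will turn a reduced fibre into one with prescribed multiplicities $N_i>1$ by this route. Producing non-reduced fibres requires a different construction (Winters realizes $\sum_iN_iE_i$ as a member of a pencil on a projective surface assembled from the $E_i$, which is also where the hypotheses $C\cdot N^t=0$ and the primitivity of $N$ genuinely enter), and you explicitly defer the remaining difficulty --- hitting the matrix $C$ on the nose with no auxiliary components --- to ``Winters's actual argument,'' which in a blind proof is precisely the point at issue. As it stands the proposal is not a proof of existence; the honest options are to cite Winters, as the paper does, or to carry out his surface construction in full.
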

\begin{proof}
The existence of $X$ and $\cX$ follows from \cite[3.7 and 4.3]{winters}. The formula for the genus $X$ is an elementary consequence of Riemann-Roch and adjunction; see for instance \cite[3.1]{nicaisetame}.
\end{proof}
 Note that Theorem \ref{thm:winters} remains valid if $k$ has characteristic zero and $R$ is complete, since we can choose an isomorphism between $R$ and the completion of the local ring of $\A^1_k$ at the origin and perform a base change on the model given by Theorem \ref{thm:winters}.

\begin{example}\label{ex:examplestrict}
Assume that $k$ has characteristic zero and that $K$ is complete.
 We choose a non-negative integer $x$. We set $r = 5$ and

{\small
$$ \begin{pmatrix} N_1 \\ N_2 \\ N_3 \\ N_4 \\ N_5 \end{pmatrix} = \begin{pmatrix} 2 \\  2 \\ 3 \\ 4 \\ 6 \end{pmatrix},
\quad
\begin{pmatrix} G_1 \\ G_2 \\ G_3 \\ G_4 \\ G_5 \end{pmatrix} = \begin{pmatrix} x \\  0 \\ 0 \\ 0 \\ 0 \end{pmatrix},
\quad
C = \begin{pmatrix} -3 & 0 & 0 & 0 & 1 \\   0 & -2 & 0 &1 & 0 \\ 0 & 0 & -4 & 0 & 2 \\ 0 & 1 & 0 &-2 & 1 \\ 1 & 0 & 2 & 1 & -2 \end{pmatrix}.$$}

 Let $\cX$ be an $snc$-model of reduction type $(N,G,C)$, with generic fiber $X$. Then we can compute by means of Corollaries \ref{cor-formulas} and \ref{cor-computsp} that $\iota(X)=1$ whereas $\iota_{\spe}(\cX)=2$.
 Using the formula in Theorem \ref{thm:winters}, we also see that $g(X) = 5+2x$.

 A picture of the dual graph of the special fiber can be found in Figure  \ref{fig:examplestrict}.

\begin{figure}[!ht]
\centering
\begin{tikzpicture}
[scale=.28,auto=left,minimum size=1cm,every node/.style={ellipse,draw}]
\node (n2) at (1,9) {$(2,0)$};
\node (n1) at (12,9) {$(2,x)$};
\node (n3) at (24,9)  {$(3,0)$};
\node (n4) at (1,2)  {$(4,0)$};
\node (n5) at (12,2)  {$(6,0)$};
\draw (n4) -- (n5);
\draw (n1) -- (n5);
\draw (n2) -- (n4);
\path (n3) edge [bend left = 22] (n5);
\path (n3) edge [bend right =  22] (n5);
\end{tikzpicture}
\caption{The dual graph of the special fiber $\cX_k$ of Example \ref{ex:examplestrict}. The vertex corresponding to a component $E_i$ is labelled with $(N_i,G_i)$.} \label{fig:examplestrict}
\end{figure}

\end{example}

\begin{example}\label{ex:examplestrict2}
Assume that $k$ has characteristic zero and that $K$ is complete.
 We choose a non-negative integer $x$. We set $r = 6$ and

{\small $$ \begin{pmatrix} N_1 \\ N_2 \\ N_3 \\ N_4 \\ N_5 \\ N_6  \end{pmatrix} = \begin{pmatrix} 2 \\  2 \\ 3 \\  3 \\ 4 \\ 6 \\ \end{pmatrix},
  \quad
  \begin{pmatrix} G_1 \\ G_2 \\ G_3 \\ G_4 \\ G_5 \\ G_6 \end{pmatrix} = \begin{pmatrix} x \\  0 \\ 0 \\ 0 \\ 0 \\ 0 \end{pmatrix},
  \quad
  C = \begin{pmatrix} -3& 0 & 0 & 0 & 0 & 1 \\   0 & -2 & 0 & 0 &1 & 0 \\ 0 & 0 & -2 & 0 & 0 & 1 \\ 0 & 0 & 0 & -2&0 & 1 \\ 0 & 1 & 0 & 0 & -2 &1  \\ 1 & 0 & 1 & 1 & 1 & -2  \end{pmatrix}.$$}

 Let $\cX$ be an $snc$-model of reduction type $(N,G,C)$, with generic fiber $X$. Then we again find that $\iota(X)=1$ and $\iota_{\spe}(\cX)=2$.
 In this example, $g(X) = 2+2x$.

A picture of dual graph of the special fiber can be found in Figure  \ref{fig:examplestrict2}.

\begin{figure}[!ht]
\centering
\begin{tikzpicture}
[scale=.28,auto=left,every node/.style={ellipse,draw}]
\node (n1) at (1,9) {$(2,x)$};
\node (n4) at (12,9) {$(3,0)$};
\node (n3) at (24,9)  {$(3,0)$};
\node (n5) at (1,2)  {$(4,0)$};
\node (n6) at (12,2)  {$(6,0)$};
\node (n2) at (-12,2)  {$(2,0)$};
\draw (n1) -- (n6);
\draw (n5) -- (n6);
\draw (n3) -- (n6);
\draw (n4) -- (n6);
\draw (n2) -- (n5);
\end{tikzpicture}
\caption{The dual graph of the special fiber $\cX_k$ of Example \ref{ex:examplestrict2}. The vertex corresponding to a component $E_i$ is labelled with $(N_i,G_i)$.} \label{fig:examplestrict2}
\end{figure}
\end{example}

Combining Examples \ref{ex:examplestrict} and \ref{ex:examplestrict2}, we arrive at the following proposition.

\begin{prop}\label{prop:strict}
Assume that $k$ has characteristic zero and that $K$ is complete. If $g=2$ or $g \geq 4$, then
 there exists a smooth, proper, geometrically
connected $K$-curve $C$ of genus $g$ such that $\iota(C)=1$ while
$\iota_{\spe}(C)>1$.
\end{prop}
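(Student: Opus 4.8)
The plan is to combine Examples \ref{ex:examplestrict} and \ref{ex:examplestrict2}, which together realize every genus value allowed in the statement. For even $g\geq 2$ I would invoke Example \ref{ex:examplestrict2} with $x=(g-2)/2\in\N$, producing a smooth, proper, geometrically connected $K$-curve of genus $2+2x=g$; for odd $g\geq 5$ I would invoke Example \ref{ex:examplestrict} with $x=(g-5)/2\in\N$, producing such a curve of genus $5+2x=g$. The first family sweeps out the genera $\{2,4,6,\dots\}$ and the second the genera $\{5,7,9,\dots\}$, so their union is precisely $\{2\}\cup\{g\mid g\geq 4\}$, which is exactly the set of genera in the statement. (Genus $3$ is the one value not reached by these two constructions; genera $0$ and $1$ cannot occur as counterexamples at all, since a curve of genus $\leq 1$ with index one carries a line bundle of degree one with a nonzero global section, hence a rational point.)

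It then remains only to pass from the distinguished $snc$-model to the curve itself. In each of the two examples we are handed a smooth, proper, geometrically connected $K$-curve $C$ together with an explicit $snc$-model $\cX$ of $C$, and Corollaries \ref{cor-formulas} and \ref{cor-computsp}, applied to the given reduction type $(N,G,C)$, yield $\iota(C)=1$ and $\iota_{\spe}(\cX)=2$. Since $\iota_{\spe}(C)\geq \iota_{\spe}(\cX)$ holds for every model of $C$ — and in fact, as $C$ is a curve, every model of $C$ is dominated by an $snc$-model, so Proposition \ref{prop-indsncd} even gives the equality $\iota_{\spe}(C)=\iota_{\spe}(\cX)$ — we conclude $\iota_{\spe}(C)\geq 2>1=\iota(C)$, which is the assertion of the proposition.

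I do not anticipate any real obstacle: the substance of the argument has already been carried out in the construction of the two reduction types and in the appeal to Winters's Theorem \ref{thm:winters} (whose validity when $k$ has characteristic zero and $K$ is complete is recorded immediately after its statement). The only points requiring care are the elementary arithmetic of which values $2+2x$ and $5+2x$ take as $x$ ranges over $\N$, and the routine verification that the $\gcd$ and $E_J^o$ bookkeeping in the two examples really give $\iota=1$ and $\iota_{\spe}=2$.
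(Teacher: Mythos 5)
Your proof is correct and takes essentially the same route as the paper, whose entire argument is that Examples \ref{ex:examplestrict} and \ref{ex:examplestrict2} realize the genera $5+2x$ and $2+2x$ respectively, which together cover $g=2$ and all $g\geq 4$. The additional details you spell out --- the inequality $\iota_{\spe}(C)\geq\iota_{\spe}(\cX)$ for any model and the parity bookkeeping --- are exactly the routine verifications the paper leaves implicit.
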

\begin{proof}
This follows from Examples \ref{ex:examplestrict} and \ref{ex:examplestrict2}, since we can write $g$ as $2+2x$ or $5+2x$ with $x$ a non-negative integer.
\end{proof}

\sss \label{sss:lowgenus} Proposition \ref{prop:strict} does not cover the cases $g\leq 1$ and $g=3$. As a matter of fact, we will now explain that in those cases, the specialization index  is one if and only if the index is one. Let $X$ be a smooth, proper and geometrically connected $K$-curve. If $X$ has genus zero then it has a rational point by the triviality of the Brauer group of $K$, so that $\iota(X)=\iota_{\spe}(X)=1$. If $X$ has genus one and $D$ is a divisor of degree $\iota(X)$ on $X$, then it follows from  Riemann-Roch that $D$ is linearly equivalent to an effective divisor $D'$ on $X$. Then any closed point in the support of $D'$ has degree at most $\iota(X)$, so that
 $\iota(X)=\iota_{\spe}(X)=\nu(X)$. This value is also equal to the period of $X$ (the order of the class of $X$ in the Weil-Ch\^atelet group of its Jacobian) by the triviality of the Brauer group of $K$ \cite[Theorem 1]{lichtenbaum}.
 The genus 3 case is treated in the following proposition.

\begin{prop} Let $X$ be a smooth, proper, geometrically connected $K$-curve of genus $3$.
If $\iota(X) = 1$, then $\iota_{\spe}(X)=1$.
\end{prop}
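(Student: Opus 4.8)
The plan is to compute $\iota_{\spe}(X)$ on a minimal model, reduce the statement to a combinatorial property of the reduction type, and then exclude the offending reduction types by a genus count. Since $X$ is a curve, every model is dominated by an $snc$-model which is an isomorphism over the $snc$-locus (see \eqref{sss:res}, after base change to the completion of $R$ via Proposition~\ref{prop:complete} if $R$ is not excellent), so by Proposition~\ref{prop-indsncd} the invariant $\iota_{\spe}(X)$ may be computed on any $snc$-model. By Corollary~\ref{cor-formulas}, for such a model $\cX$ with $\cX_k=\sum_{i\in I}N_iE_i$ we have $\gcd\{N_i\}=\iota(X)=1$, and $\iota_{\spe}(X)=1$ as soon as some $N_i=1$ or two \emph{intersecting} components have coprime multiplicities. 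Suppose this fails for every $snc$-model. The dual graph of $\cX_k$ is connected; the cases $|I|\le 2$ give $\iota_{\spe}=\iota=1$ at once, so we may assume $|I|\ge 3$, that every $N_i\ge 2$, and that any two intersecting components share a prime factor.

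Next I would bring in the minimal regular model $\cX$ of $X$. Since $g(X)=3\ge 2$, the relative dualizing sheaf $\omega_{\cX/R}$ is nef, so $E_i\cdot K_\cX=2G_i-2-E_i^2\ge 0$ for all $i$; together with $\sum_iN_i(E_i\cdot K_\cX)=2g(X)-2=4$ and $N_i\ge 2$ this forces at most two components to satisfy $E_i\cdot K_\cX>0$, all other components being $(-2)$-curves ($G_i=0$, $E_i^2=-2$, $E_i\cong\Pro^1$). The $(-2)$-curves form the exceptional locus of the contraction of $\cX$ onto its canonical model, hence sit in $ADE$-configurations -- which are $snc$ -- attached to the one or two ``special'' components; and along such a configuration the relation $C\cdot N^t=0$ becomes a linear recursion for the multiplicities (arithmetic progressions along $A_n$-chains, with the usual analogues in the $D$- and $E$-cases). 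This confines the reduction type to a short, explicitly describable list, on the $snc$ part of which the multiplicities of adjacent components are still forced to share a prime.

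Finally I would eliminate each of these cases by means of the genus identity
$$g(X)=1+\sum_iN_i(G_i-1)+\tfrac12\sum_{i<j}(E_i\cdot E_j)(N_i+N_j),$$
which follows from Theorem~\ref{thm:winters} and $C\cdot N^t=0$. The requirement that adjacent multiplicities be non-coprime, combined with $\gcd_iN_i=1$ and the recursions above, forces the intersection numbers $E_i\cdot E_j$ -- notably those between the special components and the $(-2)$-curves meeting them -- to grow so large that the identity yields $g(X)\ge 4$, a contradiction. The mechanism is already visible for a triangle with multiplicities $(6,10,15)$: there $C\cdot N^t=0$ alone forces $E_1\cdot E_2$ to be divisible by $15$, $E_1\cdot E_3$ by $10$ and $E_2\cdot E_3$ by $6$, so that $g\gg 3$; the genus-$2$ examples of Section~\ref{sec-curves} illustrate how tight the bookkeeping becomes when one tries to keep the genus small.

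The step I expect to be the main obstacle is precisely this case analysis: verifying that no reduction type allowed by the structural constraints of the second paragraph can simultaneously satisfy $\gcd_iN_i=1$, non-coprimality of the multiplicities along every edge, and $g(X)=3$. A secondary technical point to handle is that the minimal regular model need not itself be an $snc$-model (for instance when a component acquires a cusp), and that an intersecting coprime pair on it need not survive resolution; this is why the structural input is used on the $snc$, $ADE$-type part of the fiber, and why the passage to an $snc$-model is routed through Proposition~\ref{prop-indsncd} rather than through an explicit resolution.
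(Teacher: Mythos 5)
Your opening reduction coincides with the paper's: compute $\iota_{\spe}$ on an $snc$-model via Proposition~\ref{prop-indsncd} and Corollary~\ref{cor-formulas}, so that the claim becomes the purely combinatorial statement that a primitive multiplicity vector on a genus-$3$ reduction type forces either a component of multiplicity one or an intersecting pair of components with coprime multiplicities. Your structural observations are also sound as far as they go: the genus identity you write down does follow from Theorem~\ref{thm:winters} together with $C\cdot N^t=0$, and nefness of $\omega_{\cX/R}$ on the minimal regular model combined with $\sum_i N_i(E_i\cdot K_{\cX})=4$ and $N_i\geq 2$ does force at most two components with $E_i\cdot K_{\cX}>0$, the remaining components being $(-2)$-curves in negative definite, hence $ADE$, configurations.

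However, the proof stops exactly where the real work begins. The entire content of the statement is the exhaustive verification that no configuration compatible with these constraints, with $\gcd_i N_i=1$ and with every pair of adjacent multiplicities sharing a prime factor, has genus $3$; you assert that the genus identity ``forces $g\geq 4$'' but never carry out the elimination, and you yourself flag this case analysis as the expected main obstacle. The $(6,10,15)$ triangle is an illustration, not an argument: nothing in what you wrote rules out, say, chains of multiplicity-$2$ and multiplicity-$3$ curves feeding into a trivalent multiplicity-$6$ component, which is exactly the shape of the genus-$2$ example of Example~\ref{ex:examplestrict2}, and one must check that no genus-$3$ analogue exists. The paper does not redo this combinatorics from scratch: after the same reduction it transports the problem to the henselization of $\A^1_k$ via Winters's theorem and then invokes Uematsu's numerical classification of singular fibers in genus-$3$ pencils \cite{uematsu1999}, checking the finitely many surviving trunk/branch combinations ([2H] and [2J,2J]), each of which turns out to require a branch containing a multiplicity-one component. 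Your approach could in principle be completed into a classification-free proof, but as written the decisive step is missing. A secondary point to nail down is the recursion along the $ADE$ chains: the arithmetic-progression behaviour of multiplicities holds at interior $(-2)$-curves meeting exactly two neighbours transversally, but the attachment of a chain to a component with $E_i\cdot K_{\cX}>0$ can occur with higher intersection multiplicity on the minimal model, and these are precisely the points where coprimality and the passage to the $snc$-model must be controlled.
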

\begin{proof}
 We have seen in Corollaries \ref{cor-formulas} and \ref{cor-computsp} that $\iota(X)$ and $\iota_{\spe}(X)$ only depend on the dual graph of the special fiber of an $snc$-model $\cX$ of $X$, where we label each vertex with the multiplicity of the corresponding component. The assumption that $\iota(X)=1$ is equivalent to the property that the multiplicity vector $N$ of $\cX$ is primitive, by Corollary \ref{cor-formulas}.
  Denote by $K'$ the fraction field of the henselization of $\A^1_k$ at the origin. By Winters's theorem (Theorem \ref{thm:winters}), we can always find a smooth, proper and geometrically connected curve $X'$ of genus $3$ over $K'$ with an $snc$-model whose special fiber has the same dual graph (with the same labels) as $\cX_k$. Thus we may assume that $K=K'$.  Then
 we can deduce the statement from the classification of reduction types of minimal regular models of genus $3$ curves over complex discs in
  \cite{uematsu1999} (alternatively, one can also check that the results we need from that paper remain valid for arbitrary $R$, with the same arguments).

  We must show that there is no case where $\iota(X)=1$ but $\iota_{\spe}(X)>1$. This means that we only need to look
   at minimal regular models $\cX$ whose special fiber does not contain an irreducible component of multiplicity one, or two irreducible components of coprime multiplicities
    that intersect transversally in a point (in these cases, $\iota_{\spe}(X)=1$ by Corollary \ref{cor-computsp} and the fact that we can modify $\cX$ into an $snc$-model without changing its $snc$-locus).

    The cases left to consider are the following combinations of trunks: [2G], [2H], [4I], [4J], [2I,2I], [2I,2J], [2J,2J] (see \cite[Figure 3]{uematsu1999}). In order to have $\iota(X)=1$, the special fiber of the minimal regular model $\cX$ must contain a branch of type $(b)$ with $p=6$ or $(h)$ with $p=6$ and $q=4$ (see \cite[Figure 2]{uematsu1999}). All the components of the other branches have multiplicities divisible by 2. This is only possible with the trunk combinations [2H] and [2J,2J].   In both cases, we would then need an additional branch of type $(b)$ with $p=2$, but these branches have a component with multiplicity one, so that $\iota_{\spe}(X)=1$.
\end{proof}


\sss To conclude this section, we will establish an upper bound for the specialization index of a curve in terms of its genus.
 Let $X$ be a smooth, proper, geometrically connected $K$-curve of genus $g$. We have already observed in \eqref{sss:lowgenus} that $X$ has a rational
point (thus $\iota(X)=\iota_{\spe}(X)=1$) if $g=0$, and that $\iota(X)$ and $\iota_{\spe}(X)$ are both equal to the period of $X$ if $g=1$. This period can be arbitrarily large: if $E$ is an elliptic $K$-curve with semi-stable reduction, then $H^1(K,E)$ contains elements of arbitrarily large order, by \cite{shafarevich}. However, if $g\geq 2$, we obtain an
upper bound on $\iota(X)$ from
 Riemann-Roch: the degree
of the canonical divisor of $X$ equals $2g-2$, so that $\iota(X)\leq 2g-2$. With some extra work, one can bring this bound down to $g-1$; this result is well-known and can be viewed as a special case of \cite[2.1]{esnlevwit} since $1-g$ is the coherent Euler characteristic of $C$. We observe that the bound $2g-2$ still holds for the
specialization index, as a consequence of the following stronger result.

\begin{prop}\label{prop:bound}
Let $F$ be any field and let $X$ be a smooth, proper, geometrically connected $F$-curve of
genus $g\geq 2$. Let $m$ be the smallest positive multiple of the index $\iota(X)$ such that $m\geq g$.
Then $m\leq 2g-2$ and $X$ has a closed point of degree $d\leq m$.
\end{prop}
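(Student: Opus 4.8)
The plan is to combine two standard facts about $X$: the index $\iota(X)$ divides $2g-2$ (a canonical divisor is an $F$-rational divisor of that degree), and the Riemann--Roch theorem on $X$, which is available in its usual absolute form because $X$, being smooth and geometrically connected, is geometrically integral, so that $H^0(X,\O_X)=F$.

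First I would settle the inequality $m\le 2g-2$. Put $\iota=\iota(X)$; since $\iota\mid 2g-2$ and $g\ge 2$, we have $\iota\le 2g-2$. If $\iota\ge g$, then $\iota$ is itself the smallest positive multiple of $\iota$ that is $\ge g$, so $m=\iota\le 2g-2$. If instead $\iota\le g-1$, then among the $\iota$ consecutive integers $g,g+1,\dots,g+\iota-1$ at least one is divisible by $\iota$, so $m\le g+\iota-1\le 2(g-1)=2g-2$.

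For the existence of a closed point of degree at most $m$, recall that $\iota(X)$ is the gcd of the degrees of the closed points of $X$, so there is a zero-cycle of degree exactly $\iota$ on $X$; on the smooth proper curve $X$ this is a divisor $z$. Since $m$ is a positive multiple of $\iota$, write $m=k\iota$ with $k$ a positive integer. Then $kz$ is a divisor of degree $m\ge g$, so Riemann--Roch yields
\[
\dim_F H^0\bigl(X,\O_X(kz)\bigr)\ \ge\ \deg(kz)+1-g\ =\ m+1-g\ \ge\ 1.
\]
Thus $kz$ is linearly equivalent to an effective divisor $D$, of degree $m$; as $m\ge g\ge 2$, $D$ is nonzero, so its support contains a closed point $P$, and $\deg P\le\deg D=m$. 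This $P$ is the point we want.

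I do not expect a genuine obstacle here: once the index is known to be realised by an honest zero-cycle (hence a divisor), the second part is a one-line application of Riemann--Roch, and the bound on $m$ is elementary arithmetic. The only points that need a moment's attention are the case split $\iota(X)\ge g$ versus $\iota(X)\le g-1$ in the estimate for $m$, and recording why Riemann--Roch holds in its absolute form, i.e.\ the geometric integrality of $X$.
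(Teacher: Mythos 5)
Your proof is correct and follows essentially the same route as the paper: the bound $m\le 2g-2$ comes from the canonical divisor being a rational divisor of degree $2g-2\ge g$ divisible by $\iota(X)$, and the closed point comes from applying Riemann--Roch to a divisor of degree $m$ to produce a linearly equivalent effective divisor. The only cosmetic difference is your case split $\iota\ge g$ versus $\iota\le g-1$, which can be shortcut by observing directly that $2g-2$ is itself a multiple of $\iota(X)$ that is $\ge g$, so the minimality of $m$ immediately gives $m\le 2g-2$.
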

\begin{proof}
 Every canonical divisor on $X$ has degree $2g-2\geq g$ by Riemann-Roch, so that $m\leq 2g-2$.
 Applying Riemann-Roch to any divisor $D$ of degree $m$ on $X$, we find that $D$ is linearly equivalent to an effective divisor $D'$, and each point in the
 support of $D'$ has degree at most $m$.
 \end{proof}

\sss We do not know if the bound $2g-2$ on the specialization index is sharp for each fixed genus $g\geq 2$. Unfortunately,
 we cannot use Winters's result (Theorem \ref{thm:winters}) to produce examples: the theorem assumes that the multiplicity vector $N$ is primitive, in which case the index $\iota(X)$ equals one and the specialization index is at most $g$, by Proposition \ref{prop:bound}.

\if false
 The following example shows that the bound $2g-2$ for the specialization index is sharp for every $g\geq 2$.

\begin{example}\label{ex:sharpbound}
 Assume that $k$ has characteristic zero and that $K$ is complete. Choose $g\geq 2$, and set

{\small
\begin{equation*}
 \begin{pmatrix} N_1 \\ N_2 \\ N_3 \\ N_4 \\ N_5 \\ N_6  \end{pmatrix} = \begin{pmatrix} 2g-2 \\  2g-2 \\ 3g-3 \\  3g-3 \\ 4g-4 \\ 6g-6 \\ \end{pmatrix},
\quad
 \begin{pmatrix} G_1 \\ G_2 \\ G_3 \\ G_4 \\ G_5 \\ G_6 \end{pmatrix} = \begin{pmatrix} 0 \\  0 \\ 0 \\ 0 \\ 0 \\ 0 \end{pmatrix},
 \quad
 C = \begin{pmatrix} -3& 0 & 0 & 0 & 0 & 1 \\   0 & -2& 0 & 0 &1 & 0 \\ 0 & 0 &-2 & 0 & 0 & 1 \\ 0 & 0 & 0 & -2&0 & 1 \\ 0 & 1 & 0 & 0 &-2  &1  \\ 1 & 0 & 1 & 1 & 1 &-2  \end{pmatrix}.
 \end{equation*} }

By Theorem \ref{thm:winters}, there exists a proper, smooth, geometrically connected curve $X$ over $K$ with an $snc$-model
$\cX$ of type $(N,G,C)$. We easily compute that $\iota(X) = g-1$, $\iota_\spe(X) = 2g-2$ and
$g(X) = g$. Notice that $\iota(X) < \iota_\spe(X)$.

A picture of the dual graph of the special fiber can be found in Figure  \ref{fig:sharpbound}.

\begin{figure}[!ht]
\centering
\begin{tikzpicture}
[scale=.28,auto=left,every node/.style={draw,ellipse}]
\node (n1) at (1,9) {$(2g-2,0)$};
\node (n4) at (12,9) {$(3g-3,0)$};
\node (n3) at (24,9)  {$(3g-3,0)$};
\node (n5) at (1,2)  {$(4g-4,0)$};
\node (n6) at (12,2)  {$(6g-6,0)$};
\node (n2) at (-12,2)  {$(2g-2,0)$};
\draw (n1) -- (n6);
\draw (n5) -- (n6);
\draw (n3) -- (n6);
\draw (n4) -- (n6);
\draw (n2) -- (n5);
\end{tikzpicture}
\caption{The dual graph of the special fiber $\cX_k$ of Example \ref{ex:sharpbound}. The vertex corresponding to a component $E_i$ is labelled with $(N_i,G_i)$.} \label{fig:sharpbound}
\end{figure}
\end{example}
\fi

\section{The specialization index for varieties with trivial coherent cohomology}\label{sec-cohomology}
\sss Throughout this section, we assume that $k$ has characteristic zero (that is, $p=1$) and we denote by $X$ a geometrically connected, proper and smooth $K$-variety.
 Our aim is to prove that  $\iota_\spe(X) = 1$ if $X$ has trivial coherent cohomology. First, we introduce some useful notation. For every positive integer $d$,
 we denote by $K(d)$ the unique extension of degree $d$ of $K$ inside the fixed algebraic closure $K^a$, and we write $R(d)$ for its valuation ring. We also set
 $X(d) = X \times_K K(d)$. The extension $K(d)$ is Galois over $K$, with Galois group $\mu_d(k)$. We choose a geometric monodromy operator, that is, a topological generator $\sigma$ of the procyclic Galois group
 $\mathrm{Gal}(K^a/K)$.

\begin{definition}[Monodromic model]
Let $d$ be a positive integer. A {\em monodromic model} for $X$ over $R(d)$
is a regular model $\cY$ for $X(d)$ over $R(d)$ such that the Galois
action of $\mu_d(k)$ on $X(d)$ extends to
 $\cY$.
 We say that a monodromic model $\cY$ is {\em strictly semi-stable}
if $\cY_k$ is a reduced divisor with strict normal crossings on $\cY$.
\end{definition}

\begin{prop}\label{prop-monmodel}
For every sufficiently divisible positive integer $d$, there exists a strictly semi-stable
monodromic model $\cY$ of $X$ over $R(d)$ such that $\cY$ admits a
$\mu_d(k)$-equivariant morphism $$\cY \rightarrow \cX \times_R R(d)$$ of
 models of $X(d)$.
\end{prop}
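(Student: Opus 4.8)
The plan is to construct the monodromic model $\cY$ in two stages. First I would fix an $snc$-model $\cX$ of $X$ over $R$ (which exists since $p=1$) and study the base change $\cX\times_R R(d)$. Let $\cX_k=\sum_{i\in I}N_iE_i$. Locally at a point where components $E_j$, $j\in J$, meet, the equation of $R(d)$-base change is governed by the extraction of a $d$-th root of a uniformizer $\pi=u\prod_{j\in J}z_j^{N_j}$; the resulting scheme has toric-type singularities controlled by the sublattice of $\Z^J$ cut out by the congruence $\sum_j N_j a_j\equiv 0 \pmod d$. The idea is to choose $d$ divisible by $\lcm_i N_i$ (and in fact by a larger explicit integer depending on the combinatorics of $\cX_k$), so that after base change these singularities become, up to the toric dictionary, quotient singularities of a standard form that admit a canonical $\mu_d(k)$-equivariant toric resolution.

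The cleanest way to do this, and the one consistent with the log-geometric language already used in the proof of Proposition \ref{prop-sncd}, is to work with the divisorial log structures: set $\cX^+=(\cX,\text{div log str from }\cX_k)$, which is log regular, and similarly endow $\cX\times_R R(d)$ with its log structure. Base change $R\to R(d)$ is a log-smooth (Kummer) morphism of log schemes, so $(\cX\times_R R(d))^+$ is again log regular, hence normal, with characteristic monoids that are explicit localizations/saturations of the $\N^J$ from before. Now I would invoke the existence of a functorial (hence $\mu_d(k)$-equivariant, since the Galois action is by log automorphisms fixing the base) toric/combinatorial resolution of a log regular scheme by a sequence of subdivisions of the associated fans — this is exactly Kato's toroidalization, \cite{kato1994toric}, applied equivariantly. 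Choosing the subdivision to be invariant under the $\mu_d(k)$-action on the fans (one can always refine to an invariant subdivision, e.g. by barycentric subdivision, since the group is finite) produces a regular scheme $\cY$ with a $\mu_d(k)$-equivariant proper birational morphism $\cY\to \cX\times_R R(d)$, and by construction $\cY_k$ is an snc divisor.

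To get the \emph{strictly semi-stable} (reduced) conclusion, I would further require $d$ to be divisible by all the multiplicities $N_i$: then the generic multiplicity of each old component drops to $1$ after the degree-$d$ ramified base change and normalization, and a fan subdivision refining enough to separate components and make all cones smooth can be arranged so that every resulting component has multiplicity one — this is the standard semistable reduction picture in the toric/log setting (reduced special fiber $\iff$ the vertex of each maximal cone of the subdivision is a primitive lattice point mapping to $1\in\N$ under the structure map, which one enforces when choosing the subdivision). The $\mu_d(k)$-equivariance is preserved because every choice (the integer $d$, the invariant fan subdivision) is made functorially in the log scheme with its group action. Composing with the projection $\cX\times_R R(d)\to \cX$ gives the required morphism of models of $X(d)$ over $\cX\times_R R(d)$.

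The main obstacle I anticipate is \textbf{simultaneously} achieving regularity, the snc (or reduced snc) property, \emph{and} $\mu_d(k)$-equivariance: a single non-equivariant resolution is routine, but one must check that the combinatorial resolution algorithm can be run invariantly under the monodromy action on the dual complex, and that "sufficiently divisible $d$" can indeed be chosen (independently of later refinements) so that the local models become standard enough. I expect this to reduce to a purely combinatorial statement about equivariant subdivisions of rational polyhedral fans with a finite group action together with a compatible linear functional to $\N$, for which barycentric-type subdivisions do the job; the key point to verify carefully is that the multiplicity-one condition on the new components can be met at the same time, which is where the precise divisibility condition on $d$ (something like $d$ a multiple of $\lcm_{J\in\mathcal J}\lcm\{N_j : j\in J\}$, and then a bit more) enters.
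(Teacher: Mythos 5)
Your proposal takes essentially the same route as the paper, whose proof simply invokes the toroidal semi-stable reduction algorithm of \cite{KKMS} together with its equivariant refinement in \cite{wang}; your sketch via ramified base change, log regularity and invariant fan subdivisions is precisely an outline of that algorithm. You also correctly isolate the one genuinely delicate step (producing an equivariant \emph{regular} subdivision satisfying the level-one condition, which barycentric subdivision alone does not give), and that is exactly the combinatorial content the paper delegates to those references.
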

\begin{proof}
This can be proven by means of a refinement of the algorithm to construct a semi-stable model in
\cite{KKMS}. A detailed proof for the case where the base is a smooth
$k$-curve is contained in \cite{wang}. Since the proof is essentially combinatorial, it generalizes easily to the case
where the base is $\Spec R$.
\end{proof}

\begin{prop}\label{prop-powerp}
 Let $\cX$ be an $snc$-model of $X$, with $\cX_k=\sum_{i\in I}N_i E_i$, and let $d$ be a positive integer that is divisible by
$$e(\cX)=\lcm\{N_i\mid i\in I\}.$$ We denote by $\cX(d)$ the
normalization of $\cX \times_R R(d)$. If the $\mu_d(k)$-action on $\cX(d)_k(k)$
admits a fixed point, then $\iota_\spe(\cX)=1$. \end{prop}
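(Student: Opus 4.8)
The statement says: if $\cX$ is an $snc$-model of $X$ with $\cX_k = \sum_{i\in I} N_i E_i$, if $e(\cX) = \lcm\{N_i\}$ divides $d$, and if the $\mu_d(k)$-action on $\cX(d)_k(k)$ has a fixed point, then $\iota_\spe(\cX) = 1$. The key geometric fact I would exploit is a classical description of the normalization $\cX(d)$ of $\cX \times_R R(d)$ after a totally ramified base change of degree $d$ divisible by all the $N_i$: this is a Kummer-type (tame) cover, and over a point $y$ of $\cX_k$ lying on exactly the components $E_j$, $j\in J$, the local structure is governed by the monoid $\N^J$ and the multiplicity vector $(N_j)_{j\in J}$. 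Explicitly, étale-locally near such a $y$, with $\pi = u\prod_{j\in J} z_j^{N_j}$ and $\pi = \pi(d)^d$ a uniformizer of $R(d)$, the normalization is obtained by adjoining $d$-th roots, and because $N_j \mid d$ the resulting scheme $\cX(d)$ is, near its fiber over $y$, covered by charts where some of the $z_j$ have been replaced by $d/N_j$-th roots; in particular $\cX(d)$ is regular at the generic points of the components of $\cX(d)_k$ lying over $E_j$, and each such component maps to $E_j$ and has multiplicity dividing $d/N_j \cdot N_j \cdot(\text{something})$ — more precisely, one checks that the preimage in $\cX(d)$ of the generic point of $E_j$ consists of components of $\cX(d)_k$ of multiplicity exactly $1$ (since we have extracted a full $N_j$-th root of the $E_j$-part). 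This is the standard "reduction becomes semistable after a ramified cover of degree divisible by the multiplicities" phenomenon, and it is compatible with the $\mu_d(k)$-action.

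**Main steps.** First I would recall/establish the local description of $\cX(d)$ above the $snc$-locus of $\cX$: over a point on $E_J^o$, the fiber product $\cX \times_R R(d)$ has toric singularities and its normalization $\cX(d)$ has special fiber whose components over $E_j$ ($j\in J$) have multiplicity $1$; moreover $\cX(d)$ is regular at the generic points of these components, and the $\mu_d(k)$-action permutes them. Second, let $\bar y \in \cX(d)_k(k)$ be a $\mu_d(k)$-fixed point, with image $y \in \cX_k$ lying on the components $E_j$, $j\in J$. Since $\bar y$ is fixed, it maps to itself under the Galois action, so the local ring $\O_{\cX(d),\bar y}$ carries a $\mu_d(k)$-action; I would argue that there is a $\mu_d(k)$-stable component $E'$ of $\cX(d)_k$ through $\bar y$ lying over some $E_j$, hence of multiplicity $1$. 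Third — and this is where the argument is completed — the component $E'$, being $\mu_d(k)$-stable and of multiplicity $1$ in $\cX(d)_k$, descends: I would take a general $k$-point $\bar y'$ on $E'$ at which $\cX(d)$ and $(\cX(d)_k)_\red$ are both regular (possible since $\cX(d)$ is regular in codimension one and $E'$ has multiplicity one so reduces to a reduced component), apply Lemma \ref{lem:pointwithmult} on $\cX(d)$ to get a closed point $x_d \in X(d)$ of degree $1$ over $K(d)$ specializing to $\bar y'$, and then push down: the image of $x_d$ under $X(d) \to X$ is a closed point $x$ of $X$ whose degree over $K$ divides $[K(d):K]\cdot 1 = d$. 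But I need degree dividing $1$; instead I would argue that the multiplicity of the corresponding component of $\cX_k$ that the pushed-down structure sees is controlled. Actually the cleaner route: since $E'$ is $\mu_d$-stable of multiplicity one, the quotient argument shows the image component in $\cX_k$ — which is the $E_j$ it lies over — has the property that $N_j$ divides... hmm. Let me reconsider: the correct conclusion is that $N_j \cdot 1$ corresponds to a point of $X$ of degree dividing $d$ with $v(\pi) = N_j$; by Proposition \ref{prop-sncd}\eqref{it:combi} applied with $\alpha = $ (appropriate vector) we get $N_j \in \mathcal D_\cX$, but we want $1 \in \mathcal D_\cX$. The resolution is that stability of $\bar y$ under the \emph{whole} group $\mu_d(k)$ forces, via the structure of the cover, that the point $x_d$ descends to a point of $X$ of degree \emph{exactly} equal to the multiplicity of the image component divided by... — I would pin this down by observing that $\mu_d(k)$ acts on the finite set of points of $\cX(d)$ above $y$, a fixed point corresponds to a $\mu_d(k)$-orbit of size one, which by the toric/Kummer description forces the local extension at that point to be trivial of degree $1$ over $K$, i.e. the corresponding point of $X$ is $K$-rational or at least of degree $1$; thus $1 \in \mathcal D_\cX$ and $\iota_\spe(\cX) = 1$.

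**The main obstacle.** The crux — and the step I expect to require the most care — is translating "the $\mu_d(k)$-action on $\cX(d)_k(k)$ has a fixed point" into "there is a closed point of $X$ of degree $1$ specializing through $\cX$". One must control simultaneously (a) which component(s) of $\cX(d)_k$ pass through the fixed point and their multiplicities (ensuring multiplicity one, using $e(\cX) \mid d$), (b) the regularity of $\cX(d)$ and of $(\cX(d)_k)_\red$ near a suitably general point of such a component so that Lemma \ref{lem:pointwithmult} applies, and (c) the descent along $X(d) \to X$ of the degree-one point, which is where the $\mu_d(k)$-equivariance (fixed point, not just any point) is essential — a non-fixed point would only give a point of $X$ of degree dividing $d$, not $1$. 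I would handle (a) and (b) via Kato's log-regularity / toric-resolution description as in the proof of Proposition \ref{prop-sncd}, viewing $\cX \times_R R(d) \to \cX$ as a base change of the toric morphism $\Spec R(d) \to \Spec R$, for which the normalization is computed combinatorially; the $\mu_d(k)$-action is then the obvious toric torus action and a fixed $k$-point corresponds to a torus-fixed point of a toric variety, which is a smooth point (the "origin" of an affine toric chart), immediately giving regularity and multiplicity one and the trivial local extension over $K$.
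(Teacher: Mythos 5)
You are circling the right mechanism, but the proof as written has a genuine gap at the final step, and the one clean fact that makes the argument work is never actually isolated. That fact is: for a closed point $x\in E_J^o$, the fiber of $\cX(d)\to\cX$ over $x$ consists of exactly $N_J=\gcd\{N_j\mid j\in J\}$ points, and $\mu_d(k)$ permutes them \emph{transitively} (the standard computation for tame Kummer covers; see \cite[2.3.2]{nicaisetame}). Granting this, the proof is three lines: a $\mu_d(k)$-fixed point of $\cX(d)_k(k)$ lies over some $x\in E_J^o$ and is an orbit of size one inside a transitive orbit of size $N_J$, so $N_J=1$; then $\iota_\spe(\cX)=\min\{N_{J'}\mid E_{J'}^o\neq\emptyset\}\leq N_J=1$ by Corollary \ref{cor-formulas}. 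Your local analysis instead concentrates on the components of $\cX(d)_k$ and their multiplicities; that is related but not what is needed, since the relevant count is the number of points of $\cX(d)$ lying over the single closed point $x$, which is a gcd of the $N_j$, not a statement about multiplicities being one.

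The actual error is in your resolution of the ``main obstacle'': you try to conclude that the fixed point yields a closed point of $X$ of degree one (``the corresponding point of $X$ is $K$-rational or at least of degree $1$''). This is false when $|J|\geq 2$. Take $J=\{1,2\}$ with $N_1=2$, $N_2=3$: then $N_J=1$ and the fiber of $\cX(d)$ over $x$ is a single (hence fixed) point, but by Proposition \ref{prop-sncd} the positive integers $d'$ for which a closed point of degree dividing $d'$ specializes to $x$ are exactly those of the form $2\alpha_1+3\alpha_2$ with $\alpha_1,\alpha_2\geq 1$, the smallest being $5$; no degree-one closed point specializes to $x$, and Lemma \ref{lem:pointwithmult} cannot produce one. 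What saves the statement is that $\iota_\spe$ is defined via zero-cycles supported in a single specialization fiber, not via single closed points: $N_J=1$ gives $1\in N_J\cdot\Z_{>0}\subseteq\mathcal{D}_{\cX}$ by Corollary \ref{cor-formulas}, realized by a suitable $\Z$-linear combination of closed points all specializing to $x$ (e.g.\ points of degrees $5$ and $7$ in the example above). Relatedly, your claim that a $\mu_d(k)$-fixed $k$-point must be a smooth point of $\cX(d)$ is both unnecessary and not true in general: $\cX(d)$ is normal but can have toric singularities precisely over the strata with $|J|\geq 2$.
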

\begin{proof}
 Let $y$ be a closed point on  $\cX(d)_k$ that is fixed under the action of $\mu_d(k)$, and let $J$ be the unique subset of $I$ such that the image $x$ of $y$ in $\cX_k$ lies in $E_J^o$. A standard computation shows that the fiber $\cX(d)\times_{\cX}x$ of $\cX(d)$ over $x$ consists of $N_J$ points that are transitively permutated by the action of $\mu_d(k)$; see for instance \cite[2.3.2]{nicaisetame}. Since $y$ is fixed under the $\mu_d(k)$-action, we conclude that $N_J=1$, so that $\iota_{\spe}(X)=1$ by Corollary \ref{cor-computsp}.
\end{proof}

\begin{prop}\label{prop-nofix}
If $\iota_\spe(X)>1$, then there exist a positive
integer $d>0$ and a strictly semi-stable monodromic model $\cY$ of $X$ over
$R(d)$ such that the $\mu_d(k)$-action on $\cY_k$ has no $k$-rational fixed point.
 For any such $\cY$, we have
$$\sum_{i\geq 0}(-1)^i\Tr\left(\sigma\mid H^i \left(\cY_k,\mathcal{O}_{\cY_k}\right)\right)=0. $$
\end{prop}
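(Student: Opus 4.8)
The plan is to establish the two assertions of the proposition in turn: first the existence of a fixed-point-free strictly semi-stable monodromic model, then the trace identity for an arbitrary such model.

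For the existence, I would start from the fact that $p=1$, so $X$ admits an $snc$-model $\cX$, and by Proposition~\ref{prop-indsncd} we have $\iota_\spe(\cX)=\iota_\spe(X)>1$. Choose a positive integer $d$ that is divisible by $e(\cX)=\lcm\{N_i\mid i\in I\}$ and sufficiently divisible for Proposition~\ref{prop-monmodel} to apply, and let $\cY$ be the resulting strictly semi-stable monodromic model over $R(d)$, together with its $\mu_d(k)$-equivariant morphism $\cY\to\cX\times_R R(d)$. Since $\cY$ is regular, hence normal and integral, this morphism factors uniquely — and therefore $\mu_d(k)$-equivariantly — through the normalization $\cX(d)$ of $\cX\times_R R(d)$. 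By Proposition~\ref{prop-powerp}, the hypothesis $\iota_\spe(\cX)>1$ forces the $\mu_d(k)$-action on $\cX(d)_k(k)$ to have no fixed point; composing with the equivariant map $\cY_k\to\cX(d)_k$, which carries a fixed $k$-point to a fixed $k$-point, one concludes that the $\mu_d(k)$-action on $\cY_k$ has no $k$-rational fixed point. As $\sigma$ acts on $\cY_k$ through a generator of the cyclic group $\mu_d(k)$, this is equivalent to saying that $\sigma$ itself has no fixed point on $\cY_k$.

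For the trace identity, let $\cY$ now be an arbitrary strictly semi-stable monodromic model over $R(d)$ on which $\mu_d(k)$ — equivalently $\sigma$ — acts without $k$-rational fixed point, and write $\cY_k=\sum_j F_j$ for its irreducible components. For $p\geq 0$ let $a_p\colon\cY_k^{(p)}\to\cY_k$ denote the disjoint union of the closed immersions $F_J=\bigcap_{j\in J}F_j\hookrightarrow\cY_k$ over the subsets $J$ of cardinality $p+1$; since $\cY$ is regular and $\cY_k$ has strict normal crossings, each $\cY_k^{(p)}$ is a smooth proper $k$-scheme, and the $\mu_d(k)$-action on $\cY$ induces compatible actions on $\cY_k$ and on all the $\cY_k^{(p)}$. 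The key observation is that a fixed point of $\sigma$ on $\cY_k^{(p)}$ corresponds to a $\sigma$-stable subset $J$ together with a point of $F_J$ fixed by $\sigma$, whose image in $\cY_k$ is then a fixed point of $\sigma$; hence $\sigma$ acts without fixed point on every $\cY_k^{(p)}$ as well. I would then invoke the $\mu_d(k)$-equivariant exact sequence of coherent sheaves on $\cY_k$
$$0\to\mathcal{O}_{\cY_k}\to a_{0*}\mathcal{O}_{\cY_k^{(0)}}\to a_{1*}\mathcal{O}_{\cY_k^{(1)}}\to\cdots$$
(the \v{C}ech resolution attached to the closed cover $\{F_j\}$ of $\cY_k$, whose exactness is a local Koszul computation), which produces a $\sigma$-equivariant spectral sequence $E_1^{p,q}=H^q(\cY_k^{(p)},\mathcal{O}_{\cY_k^{(p)}})\Rightarrow H^{p+q}(\cY_k,\mathcal{O}_{\cY_k})$ and hence the identity
$$\sum_{i\geq 0}(-1)^i\Tr\left(\sigma\mid H^i(\cY_k,\mathcal{O}_{\cY_k})\right)=\sum_{p\geq 0}(-1)^p\sum_{q\geq 0}(-1)^q\Tr\left(\sigma\mid H^q(\cY_k^{(p)},\mathcal{O}_{\cY_k^{(p)}})\right).$$
For each $p$, the automorphism $\sigma$ of the smooth proper $k$-scheme $\cY_k^{(p)}$ has finite order (dividing $d$) and empty fixed locus, so its graph meets the diagonal transversally (vacuously), and the Woods Hole trace formula \cite[III.6.12]{sga5} gives $\sum_{q\geq 0}(-1)^q\Tr(\sigma\mid H^q(\cY_k^{(p)},\mathcal{O}_{\cY_k^{(p)}}))=0$, the right-hand side being an empty sum of local contributions. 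Substituting this into the identity above yields $\sum_i(-1)^i\Tr(\sigma\mid H^i(\cY_k,\mathcal{O}_{\cY_k}))=0$.

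The conceptually delicate point — and the reason one insists on a model without fixed points — is the applicability of the Woods Hole formula: for a finite-order automorphism the fixed locus is smooth but generally of positive dimension, and the clean formula of \cite[III.6.12]{sga5} is available only when there are no fixed points at all. Thus the real content lies in the first half of the argument, namely in arranging, via Propositions~\ref{prop-indsncd}, \ref{prop-monmodel} and \ref{prop-powerp} and the right divisibility of $d$, that $\sigma$ acts without fixed point on $\cY_k$; once that is in place, the strata $\cY_k^{(p)}$ inherit fixed-point-freeness automatically, and Woods Hole can be run componentwise. I expect the remaining points — the $\mu_d(k)$-equivariant factorization through $\cX(d)$, the $k$-linearity of the $\sigma$-action on coherent cohomology (the residue extension of $R(d)/R$ being trivial), and the exactness of the \v{C}ech resolution — to be routine.
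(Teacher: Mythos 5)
Your proposal is correct and follows essentially the same route as the paper: obtain $\cY$ from Proposition \ref{prop-monmodel}, rule out fixed points via the equivariant map to the normalization $\cX(d)$ and (the contrapositive of) Proposition \ref{prop-powerp}, then resolve $\mathcal{O}_{\cY_k}$ by the structure sheaves of the smooth proper intersection strata and apply the Woods Hole formula to the fixed-point-free finite-order action on each stratum. The only difference is that you spell out some steps the paper leaves implicit (the factorization through $\cX(d)$, the inheritance of fixed-point-freeness by the strata), which is harmless.
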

\begin{proof}
The existence of $\cY$ follows from Propositions \ref{prop-monmodel} and
\ref{prop-powerp}. Thus it suffices to prove the vanishing of the trace of $\sigma$ on the coherent cohomology of $\cY_k$.
 We write $$\cY_k=\sum_{i\in I}N_iE_i$$ and we set $E_J=\cap_{j\in J}E_j$ for every non-empty subset $J$ of $I$. We endow the closed subsets $E_J$ of $\cY_k$
 with their reduced induced structure.

 We denote by $n$ the dimension of $X$. For every integer $m$ in $\{1,\ldots,n+1\}$ we set
$$\cY^{(m)}_k=\coprod_{J\subset I,\,|J|=m}E_{J}.$$
We denote by $a_m$ the obvious morphism from $\cY^{(m)}_k$ to
$\cY_k$. The schemes $\cY^{(m)}_k$ are smooth
and proper $k$-varieties, and the $\mu_d(k)$-action on $\cY_k$ induces a
$\mu_d(k)$-action on $\cY^{(m)}_k$ without fixed $k$-points.

The sequence of $\mathcal{O}_{\cY_k}$-modules
$$\mathcal{O}_{\cY_k}\rightarrow  (a_1)_*\mathcal{O}_{\cY^{(1)}_k}\rightarrow
(a_2)_*\mathcal{O}_{\cY^{(2)}_k}\rightarrow \ldots \rightarrow (a_{n+1})_*\mathcal{O}_{\cY^{(n+1)}_k}$$ is exact, and defines a resolution of
 the structure sheaf $\mathcal{O}_{\cY_k}$ that we can use to compute the trace of $\sigma$ on the coherent cohomology of $\cY_k$. By the first spectral sequence for hypercohomology, it suffices to show that
$$\sum_{i\geq 0}(-1)^i \Tr \left(\sigma\mid H^i \left(\cY^{(m)}_k, \mathcal{O}_{\cY^{(m)}_k}\right)\right)=0$$ for every $m$ in $\{1,\ldots,n+1\}$. This follows from the
 Woods Hole trace formula \cite[III.6.12]{sga5}, since $\cY^{(m)}_k$ is smooth and proper over $k$ and $\sigma$ is a finite order automorphism of $\cY^{(m)}_k$ without fixed points.
\end{proof}

\begin{theorem}\label{thm-zero}
Assume that
$$H^i(X,\mathcal{O}_{X})=0$$ for every $i>0$. Then $\iota_\spe(X)=1$. In particular, $\iota(X)=1$, that is, $X$ admits a zero-cycle of
degree one.
\end{theorem}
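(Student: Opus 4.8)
The plan is to argue by contradiction: suppose $\iota_\spe(X) > 1$. By Proposition \ref{prop-nofix}, there exist a positive integer $d$ and a strictly semi-stable monodromic model $\cY$ of $X$ over $R(d)$ such that the $\mu_d(k)$-action on $\cY_k$ has no $k$-rational fixed point, and moreover
\[
\sum_{i\geq 0}(-1)^i\Tr\left(\sigma\mid H^i\left(\cY_k,\mathcal{O}_{\cY_k}\right)\right)=0.
\]
So it suffices to show that, under the hypothesis $H^i(X,\mathcal{O}_X)=0$ for all $i>0$, this alternating sum of traces is actually equal to $1$, giving the desired contradiction. Since $X(d) = X\times_K K(d)$ still has trivial higher coherent cohomology (flat base change) and $H^0(X,\mathcal{O}_X) = K$ because $X$ is geometrically connected, I expect the alternating sum of the \emph{dimensions} of $H^i(\cY_k,\mathcal{O}_{\cY_k})$ to be $1$; the real work is to promote this from dimensions to $\sigma$-traces.

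The key tool is the specialization/nearby-cycles comparison together with Hodge theory. First I would invoke the fact that, because $\cY$ is proper and flat over $R(d)$ with reduced strict normal crossings special fiber, the coherent cohomology of the special fiber computes the "generic" coherent cohomology in a $\sigma$-equivariant way: more precisely, $\dim_k H^i(\cY_k,\mathcal{O}_{\cY_k}) = \dim_{K(d)} H^i(X(d),\mathcal{O}_{X(d)})$, and the geometric monodromy $\sigma$ acts on $H^i(\cY_k,\mathcal{O}_{\cY_k})$. The point is to identify this $\sigma$-representation. I would use that $H^i(\cY_k,\mathcal{O}_{\cY_k})$ is the lowest piece of the Hodge filtration on the limit mixed Hodge structure, i.e. $\mathrm{Gr}_F^0 H^i_{\lim}$, and that the semisimple part of the monodromy acts on nearby cycles in a way compatible with the Hodge filtration. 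Because $H^i(X,\mathcal{O}_X)=0$ for $i>0$, the relevant graded pieces vanish for $i > 0$, so $H^i(\cY_k,\mathcal{O}_{\cY_k}) = 0$ for $i>0$; and for $i=0$, $H^0(\cY_k,\mathcal{O}_{\cY_k}) = k$ with $\sigma$ acting trivially (the constants are monodromy-invariant). Hence the alternating sum of traces is $\Tr(\sigma\mid k) = 1$.

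An alternative and perhaps cleaner route, avoiding a careful discussion of the monodromy action on limit Hodge structures, is the following. The resolution of $\mathcal{O}_{\cY_k}$ by the $(a_m)_*\mathcal{O}_{\cY_k^{(m)}}$ used in Proposition \ref{prop-nofix} is $\mu_d(k)$-equivariant, so it reduces the computation of $\sum_i (-1)^i\Tr(\sigma\mid H^i(\cY_k,\mathcal{O}_{\cY_k}))$ to the analogous sums for the smooth proper strata $\cY_k^{(m)}$. These strata carry a free $\mu_d(k)$-action (no fixed points), so if one can show each stratum sum vanishes \emph{except} for a controlled contribution, one gets the total. But actually the cleanest statement: the alternating sum over \emph{all} of $\cY$'s cohomology of traces is computed from the generic fiber. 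I would phrase it as: $\sum_i (-1)^i \Tr(\sigma\mid H^i(\cY_k,\mathcal{O}_{\cY_k}))$ equals the alternating sum of traces of $\sigma$ on a suitable cohomology of $X$ over $K$, on which—by triviality of higher coherent cohomology and the fact that $\sigma$ fixes $H^0 = K$—the answer is $1$. Concretely, one combines (i) the base-change/degeneration statement $\dim H^i(\cY_k,\mathcal O_{\cY_k}) = \dim H^i(X(d),\mathcal O_{X(d)})$, which already forces $H^i(\cY_k,\mathcal O_{\cY_k})=0$ for $i>0$, with (ii) the observation that $H^0(\cY_k,\mathcal O_{\cY_k})=k$ carries the trivial $\sigma$-action since $\cY_k$ is connected and the constant functions are preserved by any automorphism.

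The main obstacle is step (i)–(ii) done $\sigma$-equivariantly: one must be careful that the degeneration isomorphism $H^i(\cY_k,\mathcal{O}_{\cY_k}) \cong \mathrm{Gr}$ of something on the generic fiber is compatible with the monodromy $\sigma$, and that the higher cohomology genuinely vanishes rather than merely pairing off in the trace. Here I would lean on the theory of the limit mixed Hodge structure (Steenbrink) in the geometric setting afforded by Theorem \ref{thm:winters}'s hypotheses — $k$ of characteristic zero — reducing via a Lefschetz-principle / base-change argument to the case where $R$ is the henselization of $\A^1_k$ at the origin, so that $\cY\to\Spec R$ genuinely comes from a one-parameter degeneration of complex varieties and Steenbrink's results on $\mathrm{Gr}_F^0$ of the limit Hodge structure apply verbatim. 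Once one knows $\mathrm{Gr}_F^0 H^i_{\lim}(X_{\bar\eta}) \cong H^i(\cY_k,\mathcal O_{\cY_k})$ $\sigma$-equivariantly and that $\mathrm{Gr}_F^0 H^i_{\lim} = 0$ for $i>0$ when $H^i(X,\mathcal O_X)=0$ (the Hodge numbers of the limit being bounded by those of the general fiber), the proof is complete: the alternating trace is $1 \neq 0$, contradicting Proposition \ref{prop-nofix}, so $\iota_\spe(X) = 1$, and then $\iota(X) \mid \iota_\spe(X)$ forces $\iota(X)=1$ as well.
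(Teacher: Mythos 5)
Your proof is correct and follows essentially the same route as the paper: combine Proposition \ref{prop-nofix} with the Hodge-theoretic degeneration statement that forces $H^i(\cY_k,\mathcal{O}_{\cY_k})=0$ for $i>0$ and $H^0(\cY_k,\mathcal{O}_{\cY_k})=k$ with trivial $\sigma$-action, so that the alternating trace equals $1\neq 0$. The only difference is in packaging: the paper gets the freeness of $R^if_*\mathcal{O}_{\cY}$ directly from logarithmic Hodge theory (Illusie--Kato--Nakayama) plus cohomology and base change, rather than via Steenbrink's limit mixed Hodge structure and a Lefschetz-principle reduction, and indeed once the higher cohomology vanishes no equivariance issue remains to worry about.
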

\begin{proof}
Let $d$ be a positive integer and let $\cY$ be a monodromic semi-stable model for $X$ over $R(d)$.
 We denote by
$$f:\cY\rightarrow \Spec R(d)$$ the structural morphism.
 It follows from Hodge theory that the $R(d)$-modules
$R^if_*\mathcal{O}_\cY$ are free; see for instance  \cite[Theorem 7.1]{IKN} for a statement that applies in our set-up (log-smooth schemes over $R(d)$).
 But we also know that $H^i(X(d), \mathcal{O}_{X(d)}) = 0$ for all $i>0$,
 so  that $$
H^i(\cY_k,\mathcal{O}_{\cY_k})=0$$ for every $i>0$, by  \cite[II.4 Corollary 3]{mumford1974}.
 Moreover, $\sigma$ acts trivially on
$$H^0(\cY_k,\mathcal{O}_{\cY_k}) =k.$$ Thus, we find that
$$\sum_{i\geq 0}(-1)^i \Tr(\sigma\mid H^i (\cY_k,
\mathcal{O}_{\cY_k}))=1.$$
 Proposition \ref{prop-nofix} now implies that $\iota_{\spe}(X)=1$.
\end{proof}

\sss To conclude, we recall that the triviality of the coherent cohomology of $X$ is not a sufficient condition for the existence of a rational point.
 In \cite{Lafon}, Lafon has constructed an example of an Enriques surface over $\C((t))$ without rational points. It remains an important challenge to understand which additional conditions can be imposed to guarantee the existence of a rational point. A natural candidate seems to be the triviality of the geometric fundamental group (which is satisfied for rationally connected varieties but not for Enriques surfaces). Another interesting problem is whether Theorem \ref{thm-zero} remains valid in positive residue characteristic.

\end{document}